\newcommand{\be}{\begin{equation}}
\newcommand{\ee}{\end{equation}}
\newcommand{\bea}{\begin{eqnarray}}
\newcommand{\eea}{\end{eqnarray}}
\newcommand{\beas}{\begin{eqnarray*}}
\newcommand{\eeas}{\end{eqnarray*}}
\newcommand{\<}  {\langle}
\renewcommand{\>}{\rangle}
\newcommand{\tg}{{\rm t}}
\newcommand{\G}{\Gamma}
\newcommand{\bG}{{\partial\G}}
\newcommand{\R}  {{\mathrm {I\kern-.7ex R}}}
\newcommand{\N}{\mbox{\rm I\kern-.18em N}}
\newcommand{\dist}{\mathop{\rm dist}\nolimits}
\newcommand{\bcurlS}[1]{\mathop{{\rm\bf curl}_{#1}}\nolimits}
\newcommand{\curlS}[1]{\mathop{{\rm curl}_{#1}}\nolimits}
\newcommand{\bi}[1]{{\boldsymbol{#1}}}
\newcommand{\lag}{_{\rm lag}}
\newcommand{\mor}{_{\rm mor}}
\newcommand{\uh}{\underline{h}}
\newcommand{\CG}{{\cal G}}
\newcommand{\CT}{{\cal T}}
\newcommand{\bn}{{\bf n}}
\newcommand{\bt}{{\bf t}}
\newcommand{\bvarphi}{{\mbox{\boldmath $\varphi$}}}
\newtheorem{theorem}{Theorem}[section]
\newtheorem{lemma}{Lemma}[section]
\newtheorem{prop}{Proposition}[section]
\newtheorem{remark}{Remark}[section]
\title{Mortar Boundary Elements}
\author{Martin Healey
\thanks{Department of Mathematical Sciences, Brunel University,
        Uxbridge, West London UB8 3PH, UK.
        email: {\tt martin.healey@brunel.ac.uk}}
        \and
        Norbert Heuer
\thanks{Facultad de Matem\'aticas,
        Pontificia Universidad Cat\'olica de Chile,
        Casilla 306, Correo 22, Santiago, Chile.
        email: {\tt nheuer@mat.puc.cl}.
        Partially supported by CONICYT-Chile
        through FONDECYT project no. 1080044.}}
\begin{document}
\date{}
\maketitle

\bigskip
\begin{abstract}
We establish a mortar boundary element scheme for hypersingular boundary
integral equations representing elliptic boundary value problems in
three dimensions. We prove almost quasi-optimal convergence of the
scheme in broken Sobolev norms of order $1/2$. Sub-domain decompositions
can be geometrically non-conforming and meshes must be quasi-uniform
only on sub-domains. Numerical results confirm the theory.

\bigskip
\noindent
{\em Key words}: boundary element method, domain decomposition,
                 mortar method, non-conforming Galerkin method

\noindent
{\em AMS Subject Classification}: 65N55, 65N38
\end{abstract}

\section{Introduction and model problem}
\setcounter{equation}{0}

In the finite element framework, mortar methods are used to discretize a
given problem independently on sub-domains. It is a non-overlapping
domain decomposition method. Necessary continuity requirements
on interfaces of the sub-domains are implemented via Lagrangian multipliers.
The motivation is to facilitate the construction of finite element meshes on
complicated domains and to allow for parallelization. Bernardi, Maday and
Patera introduced this technique and gave first analyses in
\cite{BernardiMP_93_DDM,BernardiMP_94_NNA}. Later, geometrically non-conforming
sub-domain decompositions and problems in $\R^3$ have been studied by
Ben Belgacem and Maday \cite{BenBelgacemM_97_MEM,BenBelgacem_99_MFE}.
There is a large number of publications on mortar methods, all dealing
with the discretization of differential equations of different types and
with related numerical linear algebra. The first papers, just mentioned,
derive a priori error estimates in the framework of non-conforming methods
involving a Strang type estimate.

In this paper we establish a mortar setting for the boundary element method
(BEM) and prove almost quasi-optimal convergence for a model problem
involving the hypersingular operator of the Laplacian. The advantages
of this domain decomposition scheme (easier construction of meshes and
availability of parallel techniques) also apply to the BEM.
To be precise, we apply the mortar technique directly to the boundary
element discretization, not as a coupling procedure between boundary and
finite elements as in \cite{ChernovMS_08_hpM}.
The analysis of finite elements for the discretization of boundary integral
equations of the first kind goes back to N\'ed\'elec and Planchard
\cite{NedelecP_73_MVE}, and Hsiao and Wendland \cite{HsiaoW_77_FEM}.
Stephan \cite{Stephan_86_BIE} studied boundary elements for singular
problems on open surfaces. Hypersingular boundary integral equations are
well posed in fractional Sobolev spaces of order $1/2$ and
conforming Galerkin discretizations require continuous basis functions.
Due to the non-existence of a trace operator in these Sobolev spaces,
needed for the analysis of interface conditions,
mortar boundary elements give rise to a variational crime.
Indeed, it turns out that there is no well-defined continuous variational
formulation of the mortar setting for the BEM.
Instead we will analyze the discrete
mortar scheme as a non-conforming method for the original un-decomposed
integral equation. We follow the analysis presented in
\cite{BenBelgacem_99_MFE} where projection and extension operators
are used to bound the approximation error in the kernel space (of functions
satisfying the Lagrangian multiplier condition). Note that there is
a shorter presentation by Braess, Dahmen and Wiener \cite{BraessDW_99_MAM}
where the simpler argument \cite[Remark III.4.6]{Braess_97_FET}
is used to bound this error by a standard approximation error
(in un-restricted spaces). Nevertheless, in our case the Strang type
error estimate has a more complicated structure and it is not straightforward
to follow the argument \cite[Remark III.4.6]{Braess_97_FET}.

We will make use of some preliminary results in \cite{GaticaHH_BLM,HeuerS_CRB}.
In \cite{GaticaHH_BLM} we studied the discretization of hypersingular
operators on open surfaces using functions that vanish only in a discrete
weak sense on the boundary of the surface. Such functions in general
do not belong to the energy space of the operator and require a different
variational setting. This setting will be used also for the mortar
boundary elements. In \cite{HeuerS_CRB} this setting served to establish
(non-conforming) Crouzeix-Raviart boundary elements and to prove their
quasi-optimal convergence. Main tool in that paper is a discrete
fractional-order Poincar\'e-Friedrichs inequality. It serves to show
ellipticity of the principal bilinear form of the discrete scheme.
In this paper we generalize this inequality to the geometrically
non-conforming case, needed for general mortar decompositions.
Again, it is needed to prove (quasi-) ellipticity of the principal
bilinear form.
Our model problem is defined on an open flat surface $\G$ with polygonal
boundary. We prove that, up to logarithmical
terms, the mortar boundary element method converges quasi-optimally,
subject to a compatibility condition  of the boundary meshes and the meshes
on the interfaces for the Lagrangian multipliers. Here we rely on
the known Sobolev regularity of the exact solution leading to almost
$O(h^{1/2})$-convergence where $h$ is the maximum mesh size.
Our techniques are applicable also to polyhedral surfaces and include
meshes of shape-regular triangles and quadrilaterals.

An overview of this paper is as follows. In the rest of this section
we recall definitions of fractional order Sobolev norms and formulate
the model problem. In Section~\ref{sec_mortar} we define the mortar
scheme and present the main result (Theorem~\ref{thm_main}) establishing
almost quasi-optimal convergence of the mortar boundary element method.
Technical details and proofs are given in Section~\ref{sec_tech}.
In Section~\ref{sec_num} we present some numerical results that underline
the stated convergence of the mortar BEM.


\bigskip
First let us briefly define the needed Sobolev spaces.
We consider standard Sobolev spaces where the following norms are used:
For a bounded domain $S\subset\R^n$ and $0<s<1$ we define
\[
   \|u\|^2_{H^s(S)}:=\|u\|^2_{L^2(S)} + |u|^2_{H^s(S)}
\]
with semi-norm
\[
    |u|_{H^s(S)} := 
    \Bigl(
    \int_S \int_S \frac{|u(x)-u(y)|^2}{|x-y|^{2s+n}} \,dx\,dy
    \Bigr)^{1/2}.
\]
For $0<s<1$ the space $\tilde H^s(S)$ is defined as the completion of
$C_0^\infty(S)$ under the norm
\[
   \|u\|_{\tilde H^s(S)}
   :=
   \Bigl(
   |u|^2_{H^s(S)}
   +
   \int_S \frac{u(x)^2}{(\dist(x,\partial S))^{2s}} \,dx
   \Bigr)^{1/2}.
\]
For $s\in (0,1/2)$, $\|\cdot\|_{\tilde H^s(S)}$ and $\|\cdot\|_{H^s(S)}$
are equivalent norms whereas for $s\in(1/2,1)$ there holds
$\tilde H^s(S) = H_0^s(S)$, the latter space being the completion
of $C_0^\infty(S)$ with norm in $H^s(S)$. Also we note that
functions from $\tilde H^s(S)$ are continuously extendable by zero onto a
larger domain. For details see, e.g., \cite{LionsMagenes,Grisvard_85_EPN}.
For $s>0$, the spaces $H^{-s}(S)$ and $\tilde H^{-s}(S)$ are the
dual spaces of $\tilde H^s(S)$ and $H^s(S)$, respectively.

Let $\G$ be a plane open surface with polygonal boundary.
In the following we will identify $\G$ with a domain in $\R^2$,
thus referring to sub-domains of $\G$ rather
than sub-surfaces. The boundary of $\G$ is denoted by $\bG$.

Our model problem is:
{\em For a given $f\in L^2(\G)$ find $u\in\tilde H^{1/2}(\G)$ such that}
\be \label{Wuf}
   Wu(x):=-\frac 1{4\pi}\frac{\partial}{\partial\bn_x}
   \int_\G u(y) \frac{\partial}{\partial\bn_y}\frac 1{|x-y|} \,dS_y
   = f(x),\quad x\in\G.
\ee
Here, $\bn$ is a normal unit vector on $\G$, e.g. $\bn=(0,0,1)^T$.
We note that $W$ maps $\tilde H^{1/2}(\G)$ continuously onto $H^{-1/2}(\G)$ 
(see \cite{Stephan_87_BIE}).
We have the following weak formulation of (\ref{Wuf}).
{\em Find $u\in\tilde H^{1/2}(\G)$ such that}
\be \label{VF}
	\<Wu,v\>_\G = \<f,v\>_\G \qquad \forall v\in\tilde H^{1/2}(\G). 
\ee
Here, $\<\cdot,\cdot\>_\G$ denotes the duality pairing between $H^{-1/2}(\G)$
and $\tilde H^{1/2}(\G)$. Throughout, this generic notation will be used for
other dualities as well, the domain mentioned by the index.

A standard boundary element method (BEM) for the approximate solution of
(\ref{VF}) is to select a piecewise polynomial subspace
$\tilde X_h\subset\tilde H^{1/2}(\G)$ and to define an approximant
$\tilde u_h\in\tilde X_h$ by
\[
   \<W\tilde u_h,v\>_\G = \<f,v\>_\G \qquad \forall v\in \tilde X_h. 
\]
Such a scheme is known to converge quasi-optimally in the
energy norm, cf. Remark~\ref{rem_error} below.
In the numerical section we will compare such a conforming approximation
with a mortar approximation, for the case where the meshes are globally
conforming.

\section{Mortar method and main result} \label{sec_mortar}
\setcounter{equation}{0}

In this section we introduce the mortar boundary element method for the
approximate solution of the model problem (\ref{VF}). First we discuss
the decomposition of $\G$ into sub-domains. Then we introduce the discrete
approximation spaces. The main result of this paper is given at the end
of this section.

\subsection{Sub-domain decomposition}

We consider a decomposition of $\G$ into non-intersecting sub-domains $\G_i$,
$i=1,\ldots, N$, giving rise to a coarse mesh
\[
   \CT := \{\G_1, \ldots, \G_N\}.
\]
For ease of presentation we assume that each $\G_i$ is either a
triangle or quadrilateral. More general decompositions into polygonal sub-domains
can be dealt with by further decomposing into triangles and quadrilaterals and
by considering conforming interface conditions on additional interfaces.
The mesh $\CT$ can be
non-conforming but must satisfy the assumption (A1) below.
The diameter of a sub-domain $\G_i$ is denoted by $H_i$, and
\(
   H   := \max_{i=1,\ldots, N} H_i.
\)
The interface between two neighboring sub-domains $\G_i$, $\G_j$
($i\not=j$, $\bar\G_i\cap\bar\G_j$ contains more than a point) is denoted by
$\gamma_{ij}$. For our analysis below we need the following assumption.

\medskip
\noindent{\bf (A1)}
Each interface $\gamma_{ij}$ consists of an entire edge of $\G_i$ or $\G_j$.

\medskip
The (relatively) open edges of a sub-domain $\G_i$ are $\gamma_i^j$,
$j=1,\ldots, m$.
Here, $m$ is a generic number
($m=3$ if $\G_i$ is a triangle and $m=4$ otherwise).
Using the symbol $\bG$ for the boundary of $\G$, and similarly $\bG_i$ for the
boundary of $\G_i$, the skeleton of the sub-domain decomposition is
\[
   \gamma := \cup_{i=1}^N \bG_i \setminus\bG.
\]
According to assumption (A1) the skeleton is covered by a set of
non-intersecting edges $\gamma_{ij}$. We number the edges like
$\gamma_1$, \ldots, $\gamma_L$, giving a decomposition of the skeleton like
\(
   \bar\gamma = \cup_{l=1}^L \bar\gamma_l.
\)
In the following we will denote this decomposition of the skeleton by
\[
   \tau := \{\gamma_1, \ldots, \gamma_L\}.
\]
We will refer to these edges as {\em interface edges}.
Each interface edge $\gamma_l$ is the interface between two sub-domains
$\G_i$, $\G_j$ and is an entire edge of one or both of them.
Given an integer $l$ ($1\le l\le L$) we denote by $l\lag$
(respectively, $l\mor$) the number of a sub-domain which has $\gamma_l$ as
an edge (respectively, the number of the other sub-domain),
\[
   \gamma_l = \gamma_{l\lag, l\mor}.
\]
As mentioned before, the selection of the index pair $(l\lag, l\mor)$ for
$l\in\{1, \ldots, L\}$ is not unique but will be fixed for a specific
sub-domain decomposition of $\G$.
Below, we will introduce a Lagrangian multiplier on the interfaces and on
$\gamma_l$ we will use a mesh related to the mesh on $\G_{l\lag}$.
The side of $\gamma_l$ stemming from $\G_{l\mor}$ is usually called
{\em mortar} side in the finite element literature and this explains our
notation. The side defining the Lagrangian multiplier
is often called {\em non-mortar} side.

Corresponding to the decomposition of $\G$ we will need the product
Sobolev space
\[
   H^s(\CT) := \prod_{K\in\CT} H^s(K) = \prod^{N}_{i=1} H^s(\G_i)
\]
with usual product norm.

\subsection{Meshes and discrete spaces}

On each of the sub-domains $\G_i$ ($i\in\{1, \ldots, N\}$) we consider
a (sequence of) regular, quasi-uniform meshes $\CT_i$ consisting of shape
regular triangles or quadrilaterals, $\bar\G_i=\cup_{T\in\CT_i}\bar T$.
The maximum diameter of the elements of $\CT_i$ is denoted by $h_i$ and
we use the symbols
\[
   \uh := \min_{i=1, \ldots, N} h_i, \qquad
   h   := \max_{i=1, \ldots, N} h_i.
\]
Throughout the paper we assume without loss of generality that $h<1$. This
makes the writing of logarithmic terms in $h$ easier.

In the case of $\G$ being a square, Figure~\ref{domainmesh}
shows a conforming sub-domain decomposition (a) and a
non-conforming sub-domain decomposition (b), both with globally
non-conforming meshes.

\begin{figure}[htb] 
\begin{center}
\includegraphics[width=0.8\textwidth]{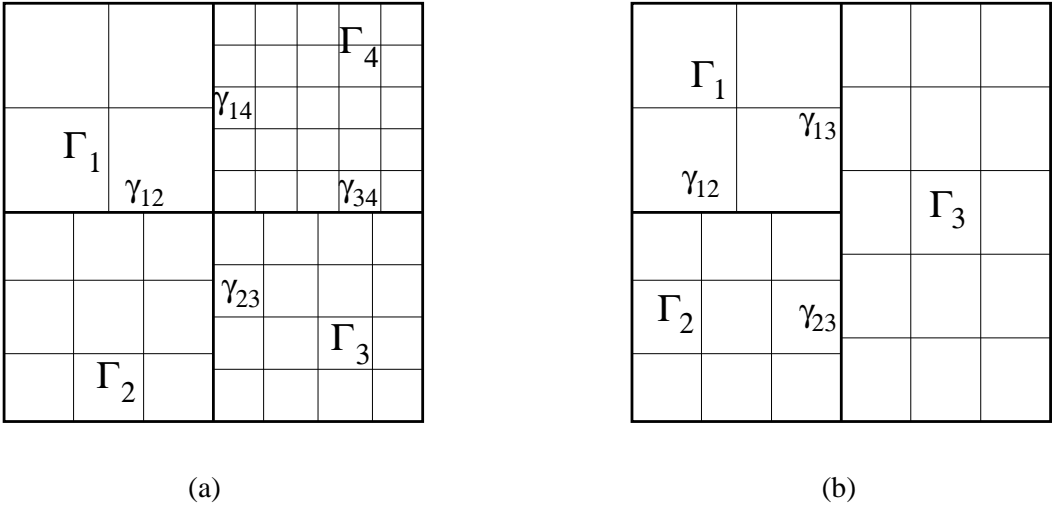}
\end{center}
\caption{Sub-domain decompositions with non-conforming meshes.}
\label{domainmesh}	
\end{figure}

Now we introduce discrete spaces on sub-domains consisting of piecewise
(bi)linear functions,
\[
   X_{h,i} := \{v\in C^0(\G_i);\; v|_T\ \mbox{is a polynomial of degree one}\
                \forall T\in\CT_i,\; v|_{\bG\cap\bG_i}=0\},\quad
   i=1,\ldots,N.
\]
The global discrete space on $\G$ is
\[
   X_h := \prod^N_{i=1} X_{h,i}.
\]
Note that functions $v\in X_h$ do satisfy the homogeneous boundary condition
along $\bG$ but are in general discontinuous across interfaces.
Therefore, $X_h$ is not a subspace of the energy space $\tilde H^{1/2}(\G)$.
Functions from different sub-domains will be coupled via a discrete
Lagrangian multiplier on the skeleton. To this end we introduce a mesh on
the skeleton $\gamma$ as follows.

On each interface edge $\gamma_l$ there is a trace mesh
$\CT_{l\lag}|_{\gamma_l}$ inherited from the mesh $\CT_{l\lag}$ on the
sub-domain $\G_{l\lag}$.
(We recall that by definition, $\gamma_l$ is an entire edge of the sub-domain
with number $l\lag$.)
This trace mesh is quasi-uniform with mesh width $h_{l\lag}$.
Now we introduce a new (coarser) quasi-uniform mesh $\CG_l$ on $\gamma_l$
in such a way that the following assumption is satisfied.

\medskip\noindent
{\bf (A2)} For any $l\in\{1, \ldots, L\}$ there holds:
           the mesh $\CG_l$ is a strict
           coarsening of the trace mesh $\CT_{l\lag}|_{\gamma_l}$.
           In particular, any interior node of $\CT_{l\lag}|_{\gamma_l}$
           together with its two neighboring elements (intervals) is covered
           by one element of $\CG_l$.

\medskip
The mesh width (length of longest element) of $\CG_l$ is denoted by $k_l$, and
\(
   k := \max_{l=1, \ldots, L} k_l.
\)
On each interface edge we define a space of piecewise constant functions,
\[
  M_{k,l}
  :=
  \{v\in L^2(\gamma_l);\; v|_J\ \mbox{is constant}\ \forall J\in\CG_l\},
  \quad l=1, \ldots, L.
\]
The space for the discrete Lagrangian multiplier then is
\[
	 M_k := \prod_{l=1}^L M_{k,l}.
\]

\noindent{\bf Notations.}
The symbols ``$\lesssim$'' and ``$\gtrsim$'' will be used in the usual sense.
In short, $a_h(v)\lesssim b_h(v)$ when there exists a constant $C>0$ independent
of the discretization parameter $h$ and the involved function $v$ such that
$a_h(v)\leq Cb_h(v)$ for any $v$ of the given set.
The double inequality $a_h(v)\lesssim b_h(v)\lesssim a_h(v)$ is simplified to
$a_h(v)\simeq b_h(v)$.
The generic constant $C$ above is usually also independent of appearing
fractional Sobolev indexes $\epsilon>0$, but this will be mentioned.
We note that these notations usually do not mean independence
of involved constants on the decomposition $\CT$ of $\G$.
In this paper we consider a generic decomposition $\CT$ which is fixed and
estimates will in general depend on $\CT$.

Throughout the paper we also will use the notation
$v_j$ for the restriction of a function $v$ to the sub-domain $\G_j$.

\subsection{Setting of the mortar boundary element method and main result}

For the setup of the mortar boundary element method we need some operators.
We introduce the surface differential operators
\[
   \bcurlS{}\varphi
   :=
   \bigl(\partial_{x_2}\varphi, -\partial_{x_1}\varphi, 0\bigr),
   \qquad
   \curlS{}\bvarphi
   :=
   \partial_{x_1}\varphi_2 - \partial_{x_2}\varphi_1
   \quad\mbox{for}\quad
   \bvarphi=(\varphi_1,\varphi_2,\varphi_3).
\]
The definitions of the surface curl operators
are appropriate just for flat surfaces (as in our case) but can be extended
to open and closed Lipschitz surfaces, cf.~\cite{BuffaCS_02_THL,GaticaHH_BLM}.
We define corresponding piecewise differential operators
$\bcurlS{H}v$ and $\curlS{H}\phi$ by
\[
   \bcurlS{H} v := \sum^{N}_{i=1} \left(\bcurlS{\G_i} v_i\right)^0, \qquad  
   \curlS{H}\phi := \sum^{N}_{i=1} \left(\curlS{\G_i}\phi_i\right)^0.
\]
The notations $\bcurlS{\G_i}$ and $\curlS{\G_i}$ refer to the restrictions
of $\bcurlS{}$ and $\curlS{}$, respectively, onto $\G_i$,
and $(\cdot)^0$ indicates extension by zero to $\G$.
We made use of the notation introduced before,
$v_i=v|_{\G_i}$, $\phi_i=\phi|_{\G_i}$.
Furthermore, we need the single layer potential operator $V$ defined by
\[
   V\bvarphi(x):=\frac 1{4\pi}
   \int_\G \frac {\bvarphi(y)}{|x-y|} \,dS_y, \qquad
   \bvarphi\in (\tilde H^{-1/2}(\G))^3,\ x\in\G.
\]
For the formulation of the mortar boundary element method we define, for
sufficiently smooth functions $v$, $w$, $\mu$, the bilinear forms
$a(\cdot,\cdot)$ and $b(\cdot,\cdot)$ by
\beas
   a(v,w) &:=& \<V\bcurlS{H} v,\bcurlS{H} w\>_{\CT}
           :=  \sum_{i=1}^N \<V\bcurlS{H} v, \bcurlS{\G_i} w\>_{\G_i},
   \\
   b(v,\mu) &:=& \<[v],\mu\>_\tau
             := \sum_{l=1}^L \<[v], \mu\>_{\gamma_l}.
\eeas
Here, as mentioned before, for a domain $S\subset\G$ or an arc $S$,
$\<\cdot, \cdot\>_S$ denotes the $L^2(S)$-inner product and its extension by
duality, and $[v]$ is the jump of $v$ across $\gamma$, more precisely
\[
   [v]|_{\gamma_l} = v_{l\lag}|_{\gamma_l} - v_{l\mor}|_{\gamma_l},
   \quad l=1, \ldots, L.
\]
Of course, for sufficiently smooth functions $v$, $w$, $\mu$ there holds
\[
   a(v,w) = \<V\bcurlS{H} v,\bcurlS{H} w\>_{\G},\qquad
   b(v,\mu) = \<[v],\mu\>_\gamma.
\]
Note that we will use the introduced notations $\<\cdot,\cdot\>_{\CT}$
and $\<\cdot,\cdot\>_\tau$ for duality pairings of product spaces corresponding
to the given decompositions ($\CT$ and $\tau$).
We also define, for a sufficiently smooth function $v$, the linear form
\[
   F(v) := \sum^{N}_{i=1} \<f_i,v_i\>_{\G_i} = \<f, v\>_{\CT}
\]
where $f\in L^2(\G)$ is the function given in (\ref{Wuf}).
The mortar boundary element method for the approximate solution of (\ref{VF})
then reads:
{\em Find $u_h\in X_h$ and $\lambda_k\in M_k$ such that}
\bea \label{MBEM}
\begin{array}{lll}
	a(u_h,v) + b(v,\lambda_k) &=\ F(v) \qquad & \forall v\in X_h, \\
	b(u_h,\psi)   &=\ 0  \qquad & \forall\psi\in M_k.
	\end{array}		
\eea
This scheme is equivalent to: {\em Find $u_h\in V_h$ such that}
\[
   a(u_h,v)=F(v) \qquad\forall v\in V_h
\]
where
\be \label{def_Vh}
   V_h = \{v\in X_h;\; b(v,\psi) = 0 \quad\forall\psi\in M_k\}.
\ee
The main result of this paper is as follows.

\begin{theorem} \label{thm_main}
There exists a unique solution $(u_h, \lambda_k)$ of {\rm(\ref{MBEM})}.
Assume that the solution $u$ of {\rm(\ref{VF})} satisfies
$u\in\tilde H^{1/2+r}(\G)$ ($r\in (0,1/2]$). Then there holds
\[
   \|u-u_h\|_{H^{1/2}(\CT)}
   \lesssim
   (|\log\uh|^2 h^r + |\log\uh|^{3/2} k^r) \|u\|_{\tilde H^{1/2+r}(\G)}.
\]
For proportional mesh sizes $h$ and $k$ this means that
\[
   \|u-u_h\|_{H^{1/2}(\CT)}
   \lesssim
   |\log\uh|^2 h^r \|u\|_{\tilde H^{1/2+r}(\G)}.
\]
The appearing constants in the estimates above are independent of $h$ and $k$
provided that the assumptions on the meshes, in particular {\rm (A1)} and
{\rm (A2)}, are satisfied.
\end{theorem}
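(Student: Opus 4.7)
The plan is to prove Theorem \ref{thm_main} in two stages: first, well-posedness of the discrete saddle-point problem (\ref{MBEM}) via a Babu\v{s}ka--Brezzi analysis, and second, a Strang-type estimate for the reduced problem on $V_h$ (cf.\ (\ref{def_Vh})), splitting the total error into a best-approximation term and a mortar consistency term.

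\textbf{Well-posedness.} Two ingredients are required: a discrete inf-sup condition for $b$ on $X_h\times M_k$, and kernel-ellipticity of $a$ on $V_h$. For the inf-sup, given $\mu\in M_k$ I would construct a function in $X_h$ supported in a strip around the skeleton whose jumps across the $\gamma_l$ imitate $\mu$; assumption (A2) is exactly what makes the underlying discrete lifting from the trace mesh into $\G_{l\lag}$ stable in the broken $H^{1/2}$-norm. For kernel-ellipticity,
\[
   a(v,v)\gtrsim\|v\|_{H^{1/2}(\CT)}^2\qquad\forall v\in V_h,
\]
I would combine the sub-domain identity $\<V\bcurlS{\G_i}v_i,\bcurlS{\G_i}v_i\>_{\G_i}\simeq|v_i|_{H^{1/2}(\G_i)}^2$ from \cite{GaticaHH_BLM} with a discrete fractional-order Poincar\'e--Friedrichs inequality that controls the missing $L^2$-mass of a broken function by the jump functional defining $V_h$. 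This is the geometrically non-conforming generalisation of the inequality in \cite{HeuerS_CRB} and would be the main output of Section \ref{sec_tech}. Continuity of $a$ on $H^{1/2}(\CT)$ follows from the mapping properties of $V$ together with the same sub-domain identity.

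\textbf{Strang estimate and error terms.} Since $V_h\not\subset\tilde H^{1/2}(\G)$ the method is non-conforming, and in fact $a(u,\cdot)$ only makes sense because of the extra regularity $u\in\tilde H^{1/2+r}(\G)$. Kernel-ellipticity and continuity of $a$ then give the abstract bound
\[
   \|u-u_h\|_{H^{1/2}(\CT)}
   \;\lesssim\;
   \inf_{v\in V_h}\|u-v\|_{H^{1/2}(\CT)}
   \;+\;
   \sup_{0\ne w\in V_h}\frac{|a(u,w)-F(w)|}{\|w\|_{H^{1/2}(\CT)}}.
\]
For the first term I would follow the projection-and-extension strategy of \cite{BenBelgacem_99_MFE}: starting from a standard sub-domain quasi-interpolant $\tilde v\in X_h$ of order $h^r$, I would correct its interface jumps by a discrete lifting into the $l\lag$-side of each $\gamma_l$, each lifting controlled in a fractional trace norm and picking up a logarithmic factor from inverse estimates in $\tilde H^{1/2}$-type norms on the interface; carried out on every edge this accumulates the announced $|\log\uh|^2 h^r$. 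For the consistency term I would apply a Maue-type identity $\<Wu,w\>_{\G_i}=\<V\bcurlS{}u,\bcurlS{\G_i}w\>_{\G_i}$ sub-domain by sub-domain to rewrite $a(u,w)-F(w)$ as a sum of duality pairings on the $\gamma_l$ between traces of $V\bcurlS{}u$ and the jumps $[w]$. Exploiting that $[w]$ is $L^2(\gamma_l)$-orthogonal to $M_k$ for $w\in V_h$, I would subtract the $M_k$-best approximation of the relevant trace; the piecewise-constant approximation error on $\CG_l$ furnishes the $k^r$ rate, with a $|\log\uh|^{3/2}$ factor absorbing the fractional trace/localisation estimates between edge and sub-domain norms.

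\textbf{Main obstacle.} The step I expect to be hardest is the discrete fractional Poincar\'e--Friedrichs inequality in the geometrically non-conforming setting. In \cite{HeuerS_CRB} the argument uses in an essential way that every interface edge is shared by two matching faces, so that a simple averaging of jumps recovers global $L^2$-control. Under only (A1) an edge of $\G_{l\mor}$ may overlap just part of an edge of $\G_{l\lag}$, and the averaging has to be replaced by a more delicate decomposition of the skeleton that is compatible with the coarser multiplier mesh $\CG_l$ while still producing constants independent of $h$ and $k$. Making this quantitative, in a way that meshes cleanly with the mortar orthogonality $b(v,\psi)=0$, is the technical heart of the proof.
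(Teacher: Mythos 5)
Your overall architecture matches the paper's: a Babu\v{s}ka--Brezzi analysis with inf-sup via a discrete lifting into $\G_{l\lag}$ made stable by (A2), kernel-ellipticity via a geometrically non-conforming discrete Poincar\'e--Friedrichs inequality, a Strang-type decomposition, the projection-and-extension construction of Ben Belgacem for the best-approximation term, and the Maue/integration-by-parts identity to rewrite the consistency term as a pairing with the jump $[w]$ and then invoke the orthogonality $b(w,\psi)=0$. You also single out the correct technical bottleneck (the fractional Poincar\'e--Friedrichs inequality in the non-conforming setting, Proposition~\ref{prop_poincare}).

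However, the abstract Strang bound you write,
\[
   \|u-u_h\|_{H^{1/2}(\CT)}
   \lesssim
   \inf_{v\in V_h}\|u-v\|_{H^{1/2}(\CT)}
   +
   \sup_{0\ne w\in V_h}\frac{|a(u,w)-F(w)|}{\|w\|_{H^{1/2}(\CT)}},
\]
is not achievable with the ingredients you propose, and this is a genuine gap. The bilinear form $a(\cdot,\cdot)$ is \emph{not} bounded on $H^{1/2}(\CT)\times H^{1/2}(\CT)$: for a non-discrete $u-v$, the piecewise surface curl $\bcurlS{H}(u-v)$ is not globally in $\bi{\tilde H}^{-1/2}_\tg(\G)$, so its pairing with $V\bcurlS{H}w$ cannot be estimated by broken $H^{1/2}$-norms. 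For two \emph{discrete} arguments one salvages boundedness with an inverse-type estimate costing a $|\log\uh|^2$ (Lemma~\ref{la_a_cont}), but $u-v$ is not discrete. The paper instead proves $a(u-v,w)\lesssim s^{-1}\|u-v\|_{H^{1/2+s}(\CT)}\,\|\bcurlS{H}w\|_{\bi{\tilde H}^{-1/2}_\tg(\CT)}$ (Lemma~\ref{la_a_cont2}), exploiting the extra $s$ of regularity via Lemma~\ref{Lemma_5_Heuer_01_ApS}. Consequently the Strang estimate (Theorem~\ref{thm_Strang}) must measure the approximation term in the \emph{stronger} norm $H^{1/2+s}(\CT)$ with an $s^{-1}$ penalty, use $\|\bcurlS{H}w\|_{\bi{\tilde H}^{-1/2}_\tg(\G)}$ as the denominator in the consistency term, and carry an explicit $|\log\uh|^{1/2}$ prefactor; moreover the kernel-ellipticity only holds with a $|\log\uh|^{-1}$ constant (Lemma~\ref{la_a_ell}), not uniformly as you assert. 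The final $|\log\uh|^2 h^r$ rate is not produced primarily by ``inverse estimates on the interface'' as your narrative suggests, but by the $s^{-1}$- and $\epsilon^{-1}$-penalties in the norm-equivalence and Poincar\'e--Friedrichs estimates together with the optimizing choice $s=\epsilon=|\log\uh|^{-1}$.
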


A proof of this theorem is given at the end of Section~\ref{sec_tech}.

\begin{remark} \label{rem_error}
As in {\rm\cite{GaticaHH_BLM}} we note that, in our case of an open surface
$\G$, the solution $u$ of {\rm(\ref{Wuf})} has strong corner and corner-edge
singularities which cannot be exactly described by standard Sobolev regularity.
It is well known that $u\in\tilde H^{s}(\G)$ for any $s<1$
(see, e.g., {\rm\cite{vonPetersdorffS_90_DEC}}) so that the error estimate by
Theorem~{\rm\ref{thm_main}} holds for any $r<1/2$. In general
$u\not\in H_0^1(\G)$ but a more specific error analysis for the conforming
BEM yields for quasi-uniform meshes the optimal error estimate
\[
   \|u-u_h\|_{\tilde H^{1/2}(\G)}
   \lesssim h^{1/2},
\]
see {\rm\cite{BespalovH_08_hpB}}.
The logarithmical perturbations in $\uh$ of our error estimate are due
to the non-conformity of the mortar method. They stem from the non-existence
of a trace operator within $H^{1/2}(\G)$ and from non-local properties of
the fractional order Sobolev norms (the difference between $\tilde H^{1/2}$ and
$H^{1/2}$-spaces).
\end{remark}

\section{Technical details and proof of the main result} \label{sec_tech}
\setcounter{equation}{0}
\setcounter{figure}{0}

We start by citing some technical results
(Lemmas~\ref{Lemma_5_Heuer_01_ApS}-\ref{la_curl})
which are needed to deal with the fractional order Sobolev norms.
Afterwards we study a discrete Poincar\'e-Friedrichs inequality
(Proposition~\ref{prop_poincare}) which will be applied to prove
ellipticity of the bilinear form $a(\cdot,\cdot)$ on $V_h$.
Afterwards an integration-by-parts formula for the hypersingular
operator is recalled from \cite{GaticaHH_BLM} and adapted to our
situation of many sub-domains. Then, Lemma~\ref{la_IP} states the
well-posedness of integration by parts.
Lemmas~\ref{la_curl_pos}-\ref{la_b_cont} study requirements for
the Babu{\v s}ka-Brezzi theory and provide details for a
Strang-type error estimate which is given by Theorem~\ref{thm_Strang}.
Later, Lemmas~\ref{la_pi} and \ref{la_error_Vh} are needed to analyze the bound
of the Strang-type estimate and lead to Theorem~\ref{thm_error} which gives
a general a priori error estimate for the mortar BEM. The section
is finished by giving a proof of the main result (Theorem~\ref{thm_main}).

\begin{lemma} \label{Lemma_5_Heuer_01_ApS} {\rm\cite[Lemma 5]{Heuer_01_ApS}}
Let $R\subset\R^2$ be a Lipschitz domain. There exists $C>0$ such that
\[
   \|v\|_{\tilde H^s(R)} \leq \frac{C}{1/2-|s|} \|v\|_{H^s(R)}
   \quad\forall s\in(-1/2,1/2),\ \forall v\in H^s(R).
\]
\end{lemma}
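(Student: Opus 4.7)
The plan is to reduce the inequality, for $s\in(0,1/2)$, to a weighted Hardy estimate on $R$; to handle $s\in(-1/2,0)$ by duality; and to note that $s=0$ is trivial as both norms equal $\|\cdot\|_{L^2(R)}$ with a constant $2C$.

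For $s\in(0,1/2)$, comparing the defining formulas shows
\[
  \|v\|_{\tilde H^s(R)}^2
  = |v|_{H^s(R)}^2 + \int_R \frac{v(x)^2}{\dist(x,\partial R)^{2s}}\,dx,
\]
so (after absorbing the trivial reverse bound $\|v\|_{H^s(R)}\leq c\,\|v\|_{\tilde H^s(R)}$ which follows from $\dist(x,\partial R)\leq \operatorname{diam}(R)$) the claim is equivalent to a Hardy-type estimate
\[
   \int_R \frac{v(x)^2}{\dist(x,\partial R)^{2s}}\,dx
   \lesssim \frac{1}{(1/2-s)^2}\,\|v\|_{H^s(R)}^2.
\]
I would establish this by a standard localization argument: cover a neighborhood of $\partial R$ by finitely many charts, use a partition of unity, and flatten $\partial R$ using its Lipschitz regularity. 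The interior contribution, where $\dist(\cdot,\partial R)$ is bounded below, is controlled by $\|v\|_{L^2(R)}^2$ with a constant uniform in $s$. The boundary contribution reduces to an analogous estimate on a half-space $\R^n_+$ with $\dist(\cdot,\partial\R^n_+)=x_n$. In that setting, since $s<1/2$, extension by zero $H^s(\R^n_+)\to H^s(\R^n)$ is bounded with constant independent of $s$, so the inequality reduces to the $\R^n$-Hardy inequality with weight $|x_n|^{-2s}$, which can be obtained by an explicit Fourier (Plancherel/Mellin) computation applied slicewise in $x_n$; this computation exposes the sharp $(1/2-s)^{-2}$ scaling of the constant.

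For $s\in(-1/2,0)$, set $t=-s\in(0,1/2)$. The duality pairings $\tilde H^{-t}(R)=(H^t(R))^*$ and $H^{-t}(R)=(\tilde H^t(R))^*$ recorded in the introduction, together with the positive-$s$ inequality applied to the test function $w$, give
\[
   \|v\|_{\tilde H^{-t}(R)}
   = \sup_{w\neq 0}\frac{|\<v,w\>_R|}{\|w\|_{H^t(R)}}
   \leq \frac{C}{1/2-t}\sup_{w\neq 0}\frac{|\<v,w\>_R|}{\|w\|_{\tilde H^t(R)}}
   = \frac{C}{1/2-t}\,\|v\|_{H^{-t}(R)},
\]
which is the desired estimate since $|s|=t$.

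The main obstacle is extracting the sharp $s$-dependence in the half-space Hardy inequality: merely obtaining a finite constant is classical (e.g.\ via interpolation between $L^2$ and $H^1_0$), but the $(1/2-s)^{-2}$ growth---which reflects the breakdown of zero-extension at the critical exponent $s=1/2$---demands a quantitative argument such as a direct Mellin-type computation on the half-line. All remaining steps (localization, flattening, the trivial reverse inequality, and the duality argument) introduce constants depending on the geometry of $R$ but uniform in $s\in(-1/2,1/2)$, so combining them yields exactly the claimed bound with a factor $(1/2-|s|)^{-1}$.
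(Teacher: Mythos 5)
The paper does not prove this lemma; it is quoted verbatim from \cite[Lemma 5]{Heuer_01_ApS} with no argument supplied, so there is no internal proof to compare against. Assessing your proposal on its own terms: the overall architecture (reduce $0<s<1/2$ to a quantitative fractional Hardy inequality, handle $-1/2<s<0$ by duality, dispose of $s=0$ trivially) is the natural one, the identification of the extra weighted $L^2$ term is correct, and the duality step is carried out correctly given the paper's convention $\tilde H^{-t}(R)=(H^t(R))'$ and $H^{-t}(R)=(\tilde H^t(R))'$.

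There is, however, a genuine gap in the positive-$s$ branch. You assert that, because $s<1/2$, extension by zero $H^s(\R^n_+)\to H^s(\R^n)$ is bounded with constant \emph{independent of $s$}, and then pass to an $\R^n$-Hardy inequality. This is false: the norm of the zero-extension operator necessarily blows up as $s\to 1/2^-$ (it is infinite at $s=1/2$). Worse, the reduction is circular. In the 1D model, expanding the Gagliardo seminorm of the zero extension $u^0$ of $u\in H^s(0,\infty)$ gives
\[
   |u^0|_{H^s(\R)}^2
   \;=\;
   |u|_{H^s(0,\infty)}^2
   \;+\;
   \frac 1s \int_0^\infty \frac{|u(x)|^2}{x^{2s}}\,dx,
\]
so the half-line Hardy integral you are trying to control is exactly the obstruction to bounding the extension; you cannot use the extension bound to derive the Hardy bound. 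The fix is to prove the half-line (respectively half-space) Hardy inequality directly, with explicit tracking of the $(1/2-s)^{-1}$ growth of the constant --- a Mellin/dilation argument as you suggest works, but it must be run on the half-line itself rather than after a zero extension to $\R^n$. With that step replaced, the localization, flattening, and duality pieces of your outline give the stated estimate.
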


\begin{lemma} \label{Lemma_6_Heuer_01_ApS}
Let $R\subset\R^2$ be a Lipschitz domain, and let $v$ be a piecewise linear
function defined on a quasi-uniform mesh on $R$ with mesh size $h<1$.
There exists a constant $C>0$ which is independent of $h$
(but may depend on $R$) such that there holds 
\[
   \|v\|_{\tilde H^{-1/2}(R)} \leq C |\log h|\, \|v\|_{H^{-1/2}(R)}.
\]
\end{lemma}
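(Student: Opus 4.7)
The approach is to push $s$ away from the dangerous endpoint $-1/2$ by a small amount $\epsilon>0$, apply Lemma~\ref{Lemma_5_Heuer_01_ApS} at $s=-1/2+\epsilon$, pay a factor $h^{-\epsilon}$ to return to $s=-1/2$ via an inverse inequality on the piecewise linear space, and then optimize $\epsilon$ in terms of $h$. The $|\log h|$ factor arises naturally from the optimal choice.

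Concretely, first I would fix $\epsilon\in(0,1/2)$ to be chosen later and apply Lemma~\ref{Lemma_5_Heuer_01_ApS} at $s=-1/2+\epsilon$ (where $1/2-|s|=\epsilon$) to obtain
\[
\|v\|_{\tilde H^{-1/2+\epsilon}(R)} \leq \frac{C}{\epsilon}\,\|v\|_{H^{-1/2+\epsilon}(R)}.
\]
Next, the standard inverse estimate for piecewise linear functions on a quasi-uniform mesh of width $h$, in negative Sobolev norms, gives
\[
\|v\|_{H^{-1/2+\epsilon}(R)} \leq C\,h^{-\epsilon}\,\|v\|_{H^{-1/2}(R)}.
\]
This is typically obtained by duality from the positive-order inverse estimate, inserting the $L^2$-projection onto the discrete space and using $h$-weighted approximation bounds on the complement. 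Finally, the embedding $\tilde H^{-1/2+\epsilon}(R)\hookrightarrow\tilde H^{-1/2}(R)$, which is dual to the continuous embedding $H^{1/2}(R)\hookrightarrow H^{1/2-\epsilon}(R)$ whose constant is bounded uniformly in small $\epsilon$ on a bounded Lipschitz domain, yields $\|v\|_{\tilde H^{-1/2}(R)}\leq C\|v\|_{\tilde H^{-1/2+\epsilon}(R)}$. Chaining the three inequalities,
\[
\|v\|_{\tilde H^{-1/2}(R)} \leq \frac{C}{\epsilon}\,h^{-\epsilon}\,\|v\|_{H^{-1/2}(R)}.
\]

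The optimization step is the heart of the argument: choose $\epsilon=1/|\log h|$. Since $h<1$, this lies in $(0,1/2)$ at least for $h<e^{-2}$ (the remaining range is handled by enlarging the constant). With this choice $h^{-\epsilon}=\exp(|\log h|/|\log h|)=e$, so $\epsilon^{-1}h^{-\epsilon}\leq e|\log h|$, giving the claimed bound. The main technical obstacle will be verifying the negative-order inverse estimate and the last embedding with constants that are uniform in $\epsilon$; both are standard but are precisely what make the optimization clean and deliver the logarithmic factor rather than a worse power of $h$.
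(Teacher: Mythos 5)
The paper's own ``proof'' is essentially a citation: it invokes \cite[Lemma 6]{Heuer_01_ApS}, which gives $\|v\|_{\tilde H^{-1/2}(R)}\le C\log\bigl((p+1)/h\bigr)\,\|v\|_{H^{-1/2}(R)}$ for piecewise polynomials of degree $p$, and then fixes $p=1$. Your proposal instead reconstructs the argument that presumably underlies that reference: perturb the Sobolev index to $-1/2+\epsilon$, apply Lemma~\ref{Lemma_5_Heuer_01_ApS}, return to $-1/2$ by an inverse estimate at cost $h^{-\epsilon}$, and optimize $\epsilon=|\log h|^{-1}$. The skeleton is exactly right, the optimization is correct, and the embedding $\tilde H^{-1/2+\epsilon}(R)\hookrightarrow\tilde H^{-1/2}(R)$ with constant uniform in $\epsilon$ is indeed straightforward on a bounded domain (dualize the uniform embedding $H^{1/2}(R)\hookrightarrow H^{1/2-\epsilon}(R)$, whose constant is controlled by $\max(1,\operatorname{diam} R)$).

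The one step you wave at, however, is the one where all the real work sits, and you are right to flag it as ``the main technical obstacle.'' The negative-order inverse estimate $\|v\|_{H^{-1/2+\epsilon}(R)}\lesssim h^{-\epsilon}\|v\|_{H^{-1/2}(R)}$ with a constant \emph{uniform in $\epsilon$} is not a free consequence of the usual inverse-estimate machinery: the duality route reduces it to $\|P_h w\|_{\tilde H^{1/2}(R)}\lesssim h^{-\epsilon}\|w\|_{\tilde H^{1/2-\epsilon}(R)}$ (with $P_h$ the $L^2$-projection), and the natural interpolation argument for this picks up equivalence constants between interpolation-space and Sobolev--Slobodeckij norms that degrade near integer orders, so $\epsilon$-uniformity is not automatic. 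It does hold, but only because careful element-level constant tracking (in the spirit of \cite[Lemma 6]{Heuer_01_ApS}, where for higher $p$ Schmidt's polynomial inverse inequality is invoked, and of the Dahmen--Faermann--Graham--Hackbusch--Sauter inverse estimates) actually establishes it. Calling it ``standard'' and moving on leaves the heart of the lemma unproved --- your sketch recovers the mechanism behind the cited result, but does not yet replace it.
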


\begin{proof}
By \cite[Lemma 6]{Heuer_01_ApS} there holds for a piecewise polynomial
function of degree $p$ the estimate
\[
   \|v\|_{\tilde H^{-1/2}(R)} \leq C \log(\frac{p+1}{h})\|v\|_{H^{-1/2}(R)},
   \quad p\geq 0, h<1.
\]
Fixing $p$ gives the claimed bound.
The proof of \cite[Lemma 6]{Heuer_01_ApS} gives full details for
rectangular meshes. For triangular meshes
the proof applies as well by making use of Schmidt's inequality for
triangles, cf. \cite[Lemma 5.1]{Dorr_84_ATp}. Nevertheless, we are
considering only polynomials of low degrees where Schmidt's inequality
is not needed.
\end{proof}

\begin{lemma} \label{la_trace} {\rm\cite[Lemma 4.3]{GaticaHH_BLM}}
Let $R\subset\R^2$ be a bounded Lipschitz domain.
There exists $C>0$ such that, for any $\epsilon\in(0,1/2)$, there holds
\[
   \|v\|_{L^2(\partial R)}
   \le
   \frac C{\epsilon^{1/2}} \|v\|_{H^{1/2+\epsilon}(R)}
   \quad\forall v\in H^{1/2+\epsilon}(R).
\]
Here $\partial R$ is the boundary of $R$.
\end{lemma}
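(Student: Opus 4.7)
The plan is to reduce to a half-space computation, where the blow-up of the trace constant as $\epsilon\to 0^+$ can be tracked explicitly via Fourier analysis. The key point is to identify precisely which integral loses integrability at $s=1/2$ and to show that it diverges like $1/\epsilon$, so that Cauchy--Schwarz yields a constant of order $\epsilon^{-1/2}$.

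First, using that $R$ is a bounded Lipschitz domain, I would cover $\partial R$ by finitely many open sets on which $\partial R$ is the graph of a Lipschitz function, pick a subordinate smooth partition of unity $\{\chi_j\}$ on a neighborhood of $\partial R$ in $\bar R$, and work with $v_j:=\chi_j v$. Each $v_j$ can be transported by a bi-Lipschitz change of variables to a function supported in the half-ball of $\R^2_+$ (vanishing on the curved part of the boundary) and then extended by a standard Stein-type operator to a function $\tilde v_j\in H^{1/2+\epsilon}(\R^2)$ with $\|\tilde v_j\|_{H^{1/2+\epsilon}(\R^2)} \lesssim \|v\|_{H^{1/2+\epsilon}(R)}$. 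Since the partition of unity, the flattening diffeomorphism, and the extension operator are all fixed and independent of $\epsilon$, none of these steps contributes an $\epsilon$-dependent constant.

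The core of the proof is then the half-space trace inequality
\[
   \|w(\cdot,0)\|_{L^2(\R)} \le \frac{C}{\epsilon^{1/2}}\,\|w\|_{H^{1/2+\epsilon}(\R^2)},
   \qquad w\in H^{1/2+\epsilon}(\R^2).
\]
Denoting by $\tilde w(\xi_1,x_2)$ the partial Fourier transform in the tangential variable and writing $\tilde w(\xi_1,0)=(2\pi)^{-1/2}\int \hat w(\xi_1,\xi_2)\,d\xi_2$ in terms of the full Fourier transform, Cauchy--Schwarz with weight $(1+|\xi|^2)^s$ gives
\[
   |\tilde w(\xi_1,0)|^2
   \;\lesssim\;
   \Bigl(\int_{\R} (1+|\xi|^2)^{-s}\,d\xi_2\Bigr)
   \Bigl(\int_{\R} (1+|\xi|^2)^{s}\,|\hat w(\xi)|^2\,d\xi_2\Bigr).
\]
The substitution $\xi_2=y\sqrt{1+\xi_1^2}$ evaluates the first factor as $(1+\xi_1^2)^{1/2-s}\int_{\R}(1+y^2)^{-s}dy$, and the last integral behaves like $1/(2s-1)=1/(2\epsilon)$ near $s=1/2$. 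Since $1/2-s=-\epsilon\le 0$, the prefactor $(1+\xi_1^2)^{-\epsilon}$ is bounded by $1$, and Plancherel in $\xi_1$ yields the claimed $\epsilon^{-1/2}$ bound.

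The final step is to combine the half-space estimate over the finitely many charts, which gives the global inequality on $\partial R$ with an $\epsilon$-independent multiplicative constant times $\epsilon^{-1/2}$. The main obstacle is bookkeeping: one has to verify carefully that the extension operator and the local diffeomorphisms have norms on $H^{1/2+\epsilon}$ that are uniform for $\epsilon\in(0,1/2)$; this follows because they are bounded on $H^0$ and $H^1$ with constants independent of $\epsilon$, and interpolation preserves the bound uniformly away from the endpoints. Everything else is a direct Fourier computation, so the only place where the constant can blow up is the explicit integral $\int(1+y^2)^{-s}dy$, which is responsible for the $\epsilon^{-1/2}$ factor.
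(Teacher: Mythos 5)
The paper offers no proof of this lemma --- it is imported verbatim as \cite[Lemma 4.3]{GaticaHH_BLM} --- so there is nothing in-text to compare against line by line. That said, your argument is the standard and correct way to establish this estimate with an explicit $\epsilon$-dependence, and it is almost certainly the same route taken in the cited reference: localize, flatten, extend, and reduce to a half-space Fourier computation. The Fourier step is right: with $s=1/2+\epsilon$, $\int_{\R}(1+y^2)^{-s}\,dy$ behaves like $1/(2\epsilon)$ as $\epsilon\to 0^+$, $(1+\xi_1^2)^{1/2-s}\le 1$, and Plancherel in $\xi_1$ gives the $\epsilon^{-1/2}$ factor. Your claim that localization, flattening by a bi-Lipschitz map, and extension do not contribute $\epsilon$-dependent constants is also correct, via interpolation between the $L^2$ and $H^1$ endpoints.

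One point worth making explicit, since the paper defines $H^s$ by the Sobolev--Slobodeckij double integral while your half-space step uses the Bessel-potential (Fourier) norm: the two are equivalent on $\R^2$, but the equivalence constant depends on $s$ and degenerates at $s\to 0^+$ and $s\to 1^-$. Fortunately the direction you need, $\|\cdot\|_{\text{Fourier}}\lesssim\|\cdot\|_{\text{Slobodeckij}}$, holds with an absolute constant on all of $(0,1)$ (essentially because $\int|\xi|^{2s}|\hat w|^2 = K(s)^{-1}[w]_{H^s}^2$ with $K(s)\simeq 1/(s(1-s))$ bounded below), so your chain of inequalities goes through uniformly for small $\epsilon$. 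For $\epsilon$ bounded away from $0$, say $\epsilon\ge 1/4$, one can simply invoke the $H^{3/4}(R)\to L^2(\partial R)$ trace and the monotonicity of the norms, since $\epsilon^{-1/2}$ stays bounded there; this avoids having to track what happens to the norm equivalence as $s\to 1^-$. A second minor omission is that $\|\cdot\|_{L^2(\partial R)}$ uses arc-length, and after flattening this picks up a factor $\sqrt{1+|\phi'|^2}$, bounded since $\phi$ is Lipschitz. These are bookkeeping remarks rather than gaps; the proof is sound.
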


\begin{lemma} \label{la_curl}
For $S$ being one of the sub-domains $\G_i\in\CT$ or $\G$ there holds
\be \label{curl_semipos}
   |v|_{H^{1/2}(S)} \lesssim \|\bcurlS{S} v\|_{\bi{H}_\tg^{-1/2}(S)}
   \quad\forall v\in H^{1/2}(S).
\ee
The restriction of $\bcurlS{S}$ onto $\tilde H^{1/2}(S)$ is continuous,
\be \label{curl_tilde}
   \bcurlS{S}:\; \tilde H^{1/2}(S) \rightarrow \bi{\tilde H}^{-1/2}_\tg(S).
\ee
Moreover, there holds the continuity
\be \label{curl}
   \bcurlS{S}:\; H^{1/2+s}(S) \rightarrow \bi{H}^{-1/2+s}_\tg(S)
    \quad\forall s\in [0,1/2].
\ee
\end{lemma}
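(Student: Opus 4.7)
The plan is to reduce all three assertions to analogous properties of the gradient via the flat-surface identity $\bcurlS{S} v = (\partial_{x_2}v,-\partial_{x_1}v,0)$. Since $S$ is a planar polygonal domain, tangential vector fields are simply $\R^2$-valued fields (with trivial third component), so the tangential spaces $\bi{H}^s_\tg(S)$ and $\bi{\tilde H}^s_\tg(S)$ reduce to componentwise products of the scalar spaces $H^s(S)$ and $\tilde H^s(S)$. In particular $\|\bcurlS{S} v\|_{\bi{H}^s_\tg(S)} \simeq \|\nabla v\|_{H^s(S)^2}$ and similarly for the $\bi{\tilde H}^s_\tg$ version, so everything reduces to scalar statements about the first-order partials $\partial_{x_1}$ and $\partial_{x_2}$.

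I would prove (\ref{curl}) by interpolation between the endpoint bounds $\nabla:H^1(S)\to L^2(S)^2$ (immediate) and $\nabla:H^{1/2}(S)\to H^{-1/2}(S)^2$, the latter following because a first-order partial maps $H^{1/2}$ continuously into $H^{-1/2}$ on a Lipschitz domain by density and duality with $H_0^{1/2}(S)=\tilde H^{1/2}(S)$. For (\ref{curl_tilde}), the defining property of $\tilde H^{1/2}(S)$ says any $v\in\tilde H^{1/2}(S)$ extends by zero to $v^0\in H^{1/2}(\R^2)$, with distributional derivatives $\partial_i v^0$ vanishing outside $\bar S$ and coinciding with $\partial_i v$ inside. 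Applying (\ref{curl}) on an enclosing domain places $\partial_i v^0$ in $H^{-1/2}(\R^2)$, and zero-extendability to a global $H^{-1/2}$-function characterises $\tilde H^{-1/2}(S)$, giving the desired continuity into $\bi{\tilde H}^{-1/2}_\tg(S)$.

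The main obstacle is (\ref{curl_semipos}), requiring the reverse estimate $|v|_{H^{1/2}(S)}\lesssim\|\nabla v\|_{H^{-1/2}(S)^2}$. I would interpolate between two endpoints: the trivial identity $|v|_{H^1(S)}=\|\nabla v\|_{L^2(S)^2}$ at $s=1$, and at $s=0$ the Ne\v{c}as-type inequality controlling $v$ modulo constants by $\|\nabla v\|_{H^{-1}}$ on Lipschitz $S$. Both sides being seminorms that vanish on constants, real interpolation at parameter $1/2$ delivers a bound on the $H^{1/2}$-seminorm by $\|\nabla v\|_{H^{-1/2}}$. The delicate point, which is the technical core of the argument, is the correct identification of the interpolation scale of quotient-by-constants spaces on a Lipschitz (polygonal) domain and the compatibility of the gradient with the K-functional; these are standard facts but require careful bookkeeping in fractional Sobolev calculus.
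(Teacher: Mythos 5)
The paper disposes of (\ref{curl_semipos}) and (\ref{curl_tilde}) by citing Lemmas~4.1 and~2.2 of \cite{GaticaHH_BLM}, and obtains (\ref{curl}) by interpolating between the $s=0$ endpoint $H^{1/2}(S)\to\bi H^{-1/2}_\tg(S)$ (Lemma~2.1 of the same reference) and the trivial $s=1/2$ endpoint $H^1(S)\to\bi L^2_\tg(S)$. You are attempting to reprove all of this from scratch, which is legitimate, but two of your steps have genuine problems.

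First, your proof of the $s=0$ endpoint of (\ref{curl}) rests on the claim ``$H_0^{1/2}(S)=\tilde H^{1/2}(S)$''. This is false: the paper itself records (after defining the norms) that $\tilde H^s(S)=H_0^s(S)$ only for $s\in(1/2,1)$, and that $\tilde H^s$ and $H^s$ have equivalent norms only for $s\in(0,1/2)$. At $s=1/2$ the space $\tilde H^{1/2}(S)$ is the Lions--Magenes space $H^{1/2}_{00}(S)$, a strict dense subspace of $H^{1/2}(S)=H_0^{1/2}(S)$ with a strictly stronger norm. Because $H^{-1/2}(S)$ is defined as $(\tilde H^{1/2}(S))'$, the duality computation you sketch requires precisely the bound $\|\partial_i\phi\|_{\tilde H^{-1/2}(S)}\lesssim\|\phi\|_{\tilde H^{1/2}(S)}$, which is the scalar form of (\ref{curl_tilde}); so as written the argument is circular, not merely a matter of bookkeeping. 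A cleaner route to this endpoint, avoiding duality, is to use a continuous extension $E:H^{1/2}(S)\to H^{1/2}(\R^2)$, observe $\partial_i Ev\in H^{-1/2}(\R^2)$ by Fourier multiplier theory, and restrict.

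Second, your treatment of (\ref{curl_semipos}) via interpolating the Ne\v cas inequality between $s=0$ and $s=1$ is not an established argument you can simply invoke. You would need to (i) identify the real interpolation space $[L^2(S)/\R, H^1(S)/\R]_{1/2,2}$ with $H^{1/2}(S)/\R$ with equivalent norms on a polygonal domain, (ii) identify $[H^{-1}(S), L^2(S)]_{1/2,2}$ with $H^{-1/2}(S)=(\tilde H^{1/2}(S))'$ (this is where the $\tilde H$/$H$ distinction resurfaces, since $H^{-1}(S)=(\tilde H^1(S))'$), and (iii) verify that the linear operator furnishing the Ne\v cas bound at both ends is the same bounded operator, so that interpolation applies. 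None of these is automatic, and your own wording acknowledges the gap without closing it. This is exactly the kind of technicality the paper sidesteps by citing \cite{GaticaHH_BLM}, where the bound is obtained by a different, direct argument.

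Your zero-extension argument for (\ref{curl_tilde}) is sound in outline and essentially the standard route. The interpolation step for (\ref{curl}), once a correct $s=0$ endpoint is in hand, also matches the paper; note only that you interpolate between $s=1/2$ and $s=0$ in the paper's parametrisation, not between Sobolev exponents $1$ and $1/2$ in the abstract sense, though the two are the same after relabelling.
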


\begin{proof}
The bounds (\ref{curl_semipos}) and (\ref{curl_tilde}) are proved by Lemmas~4.1
and 2.2 in \cite{GaticaHH_BLM}, respectively.
By Lemma 2.1 in \cite{GaticaHH_BLM},
$\bcurlS{S}:\; H^{1/2}(S) \rightarrow \bi{H}^{-1/2}_\tg(S)$ is continuous,
and $\bcurlS{S}:\; H^1(S) \rightarrow \bi{L}^2_\tg(S)=\bi{H}^0_\tg(S)$
is continuous as well. Estimate (\ref{curl}) then follows by interpolation.
\end{proof}
The following result is a generalized version of a discrete
Poincar\'e-Friedrichs inequality in fractional order Sobolev spaces, cf.
Theorem~8 in \cite{HeuerS_CRB}.

\begin{prop} \label{prop_poincare}
There exists a constant $C>0$, independent of the decomposition $\CT$ as
long as sub-domains are shape-regular, such that for all
$\epsilon\in(0,1/2]$ there holds
\[
   \|v\|^2_{L^2(\G)}
   \le
   C \left( 
      \epsilon^{-1} |v|_{H^{1/2+\epsilon}(\CT)}^2
      +
      \sum_{l=1}^L |\gamma_l|^{-1-2\epsilon} (\int_{\gamma_l}[v]\,ds)^2 
   \right)
   \quad\forall v\in H^{1/2+\epsilon}(\CT),\; v|_{\partial\G}=0.
\]
Here, $|\gamma_l|$ denotes the length of $\gamma_l$.
\end{prop}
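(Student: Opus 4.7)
The plan is to reduce the claim to a standard fractional Poincaré-Friedrichs inequality on $\G$ by constructing a piecewise-constant shift that absorbs the average of each interfacial jump, and then applying local fractional Poincaré-type estimates on each sub-domain to bound the residual $L^2$ norm by the broken $H^{1/2+\epsilon}$ seminorm.

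First I would fix a spanning tree of the dual graph of the sub-domain decomposition $\CT$ (vertices are sub-domains, edges are interfaces $\gamma_l$), rooted at some $\G_{i_0}$ possessing at least one edge on $\bG$; such a root exists because $\G$ is connected and $\bG\not=\emptyset$. Along this tree I define shift constants $c_i$ recursively by $c_{i_0}:=0$ and, for each child $\G_j$ of $\G_i$ sharing an interface $\gamma_l$, $c_j := c_i \pm |\gamma_l|^{-1}\int_{\gamma_l}[v]\,ds$ (the sign depending on the \emph{lag}/\emph{mor} labelling of $\gamma_l$). By construction, the shifted function $\hat v := v - \sum_i c_i\chi_{\G_i}$ has zero-mean jump across every tree edge, and its trace on $\bG$ still vanishes at the root.

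Next I would split $\|v\|_{L^2(\G)}^2 \lesssim \|\hat v\|_{L^2(\G)}^2 + \sum_i |\G_i|\,c_i^2$ and bound each piece. For $\|\hat v\|_{L^2(\G_i)}^2$, I would apply a local fractional Poincaré inequality that bounds $L^2$ by $|v|_{H^{1/2+\epsilon}(\G_i)}^2$ plus the squared mean of $\hat v$ on a reference edge $\gamma^*_i$ of $\G_i$ (the parent-tree edge for non-root sub-domains, a boundary edge of $\bG$ for the root); by the zero-mean jump property of $\hat v$ across tree edges and by $v|_{\bG}=0$, these edge-average contributions can be chained and telescoped cleanly. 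Invoking Lemma~\ref{la_trace} to convert residual edge $L^2$-norms back to $H^{1/2+\epsilon}$-norms on adjoining sub-domains produces the $\epsilon^{-1}|v|^2_{H^{1/2+\epsilon}(\CT)}$ contribution. For $\sum_i |\G_i|c_i^2$ I would expand $c_i$ as a signed sum over its tree path, apply Cauchy-Schwarz, and rearrange the double sum so that each interface $\gamma_l$ is charged at most with the target weight $|\gamma_l|^{-1-2\epsilon}(\int_{\gamma_l}[v]\,ds)^2$, using shape-regularity to convert $|\G_i|$-factors to $|\gamma_l|$-factors.

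The main obstacle will be simultaneously securing the sharp $\epsilon^{-1}$ factor on the seminorm term and the sharp length weight $|\gamma_l|^{-1-2\epsilon}$ on the jump term, uniformly in $\CT$. The $\epsilon^{-1}$ dependence is inherited from Lemma~\ref{la_trace}, whose constant blows up as $\epsilon\to 0^+$, and passing it through the chain argument without picking up an avoidable $\epsilon^{-2}$ penalty forces one to use the trace inequality only once per relevant edge. The length weight $|\gamma_l|^{-1-2\epsilon}$ must emerge from a careful scaling of the unit-reference trace and Poincaré inequalities to a sub-domain of diameter commensurate with $|\gamma_l|$, and this is where the shape-regularity of the sub-domains enters essentially. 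Finally, uniformity in $\CT$ is obtained because the arising constant depends only on reference-domain shape constants and on a combinatorial factor coming from tree-path lengths, which in the present setting of a fixed decomposition is absorbed into the generic constant $C$.
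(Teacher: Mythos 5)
Your proof proposal takes a completely different route from the paper. The paper reduces the claim to the already established result \cite[Theorem~8]{HeuerS_CRB} for \emph{conforming} decompositions of $\G$ into triangles: one introduces auxiliary edges to split quadrilateral sub-domains into triangles and to turn $\CT$ into a conforming triangulation $\tilde\CT$, observes that the broken Sobolev--Slobodeckij seminorm can only decrease under this refinement, that jumps across the new auxiliary edges vanish because $v\in H^{1/2+\epsilon}(T)$ for each $T\in\CT$, and finally that the mean-zero term appearing in the conforming-case estimate can be dropped when $v|_{\bG}=0$. Your spanning-tree construction is instead an attempt to prove the discrete fractional Poincar\'e--Friedrichs inequality from scratch, closer in spirit to proving the cited theorem itself than to deducing the present proposition from it.

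As a from-scratch argument the sketch leaves genuine gaps, some of which you yourself flag. The telescoping of edge averages of $\hat v$ up the tree, each converted back to a volume $H^{1/2+\epsilon}$-norm via Lemma~\ref{la_trace}, risks accumulating a factor $\epsilon^{-1/2}$ per tree level, so the claimed sharp single factor $\epsilon^{-1}$ in front of the broken seminorm is not established; the same conversion also feeds the $L^2$-part of $\|\hat v\|_{H^{1/2+\epsilon}}$ back into the recursion, which must be controlled. In the jump-term bound, expanding $c_i$ along its tree path and applying Cauchy--Schwarz leaves each $|\gamma_l|^{-2}\bigl(\int_{\gamma_l}[v]\,ds\bigr)^2$ multiplied by a sum, over all sub-domains whose tree path uses $\gamma_l$, of $|\G_i|$ times the path length; reconciling this global combinatorial factor with the local target weight $|\gamma_l|^{-1-2\epsilon}$ is nontrivial and is not carried out. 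Most decisively, the proposition asserts a constant $C$ that is \emph{independent of the decomposition} $\CT$ (subject only to shape regularity of the sub-domains); your closing concession that the tree-path combinatorics are ``absorbed into the generic constant $C$'' for a fixed decomposition is exactly the $\CT$-dependence the statement rules out, so the argument as written does not prove the stated result.
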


\begin{proof}
For the case of conforming decompositions $\tilde\CT$ of $\G$
into triangles, \cite[Theorem 8]{HeuerS_CRB} proves that there holds
\be \label{PF_conf}
   \|v\|^2_{L^2(\G)}
   \le
   C \left( 
      \epsilon^{-1} |v|_{H^{1/2+\epsilon}(\tilde\CT)}^2
      +
      \sum_{l=1}^L |\gamma_l|^{-1-2\epsilon} (\int_{\gamma_l}[v]\,ds)^2 
      +
      |\int_\G v\,dx|^2
   \right)
   \quad\forall v\in H^{1/2+\epsilon}(\tilde\CT).
\ee
It is easy to see that the mean zero term can be avoided
by assuming the homogeneous boundary condition for $v$.
To obtain the result for our non-conforming decomposition $\CT$ including
quadrilaterals we introduce further edges to
reduce quadrilateral sub-domains to triangles and to transform $\CT$ into a
conforming decomposition $\tilde\CT$.
By definition of the Sobolev-Slobodeckij semi-norm there holds
\[
   |v|_{H^{1/2+\epsilon}(\tilde\CT)} \le |v|_{H^{1/2+\epsilon}(\CT)}.
\]
We note that for new edges $\gamma'$ there holds $[v]|_{\gamma'}=0$ by the
trace theorem and the regularity $v\in H^{1/2+\epsilon}(T)$ for any $T\in\CT$.
The result then follows from (\ref{PF_conf}).
\end{proof}

Following \cite{GaticaHH_BLM} we now examine an integration-by-parts formula
for the hypersingular operator.
For a smooth scalar function $v$ and a smooth tangential vector field
$\bvarphi$, integration by parts gives
\[
   \<\bcurlS{\G_i} v,\bvarphi\>_{\G_i}
   =
   \<v,\curlS{\G_i}\bvarphi\>_{\G_i} - \<v,\bvarphi\cdot\bt_i\>_{\bG_i},
   \quad i=1, \ldots, N.
\]
Here, $\bt_i$ is the unit tangential vector on $\partial\G_i$ (oriented
mathematically positive when identifying $\G_i$ with a subset of $\R^2$
which is compatible with the identification of $\G$ as a subset of $\R^2$).
Applying this formula to $\bvarphi = (V\bcurlS{\G} u)|_{\G_i}$,
we obtain for smooth functions $v$ and $u$
\[
   \<\bt_i\cdot V\bcurlS{\G} u, v_i\>_{\bG_i}
   =
   \<\curlS{\G_i} V\bcurlS{\G} u, v_i\>_{\G_i}
   -
   \<V\bcurlS{\G} u,\bcurlS{\G_i} v_i\>_{\G_i}.
\]
Now we sum over $i$ and take into account that $t_i=-t_j$ on $\gamma_{ij}$.
Further we let $\gamma_0 := \bG$, use the convention for the jump
\(
   [v]|_{\gamma_0} = v|_{\gamma_0},
\)
denote by $\bt_0$ the unit tangential vector along $\bG$ (again mathematically
positive oriented) and let $0\lag:=0$ (remember the notation $l\lag$ and
$l\mor$ for the numbers of the Lagrangian multiplier side and mortar side of
$\gamma_l$, respectively).
This yields
\beas
   \sum_{l=0}^L \<\bt_{l\lag}\cdot V\bcurlS{\G} u, [v]\>_{\gamma_l}
   &=&
   \sum^N_{i=1} \<\curlS{\G_i} V\bcurlS{\G} u, v_i\>_{\G_i}
   -
   \sum^N_{i=1} \<V\bcurlS{\G} u, \bcurlS{\G_i} v_i\>_{\G_i}
   \\
   &=&
   \<\curlS{\G} V\bcurlS{\G} u, v\>_{\CT}
   -
   \<V\bcurlS{\G} u,\bcurlS{H} v\>_{\CT}
\eeas
for a piecewise (with respect to $\CT$) smooth function $v$ on $\G$ with
$v_i:=v|_{\G_i}$, as defined before.
In the last step we used the fact that
\[
   \curlS{H} w = \curlS{\G} w\quad\forall w\in \bi{H}^{1/2}_\tg(\G),
\]
which holds by a density argument and the continuity of
$\curlS{\G}:\; \bi{H}^{1/2}_\tg(\G) \to H^{-1/2}(\G)$ as the adjoint
operator of
$\bcurlS{\G}:\; \tilde H^{1/2}(\G) \to \bi{\tilde H}^{-1/2}_\tg(\G)$,
cf. (\ref{curl_tilde}).

Now we use the relation
\[
   Wu= \curlS{\G} V \bcurlS{\G} u \quad (u\in \tilde H^{1/2}(\G)),
\]
see \cite{Maue_49_FAB,Nedelec_82_IEN} and \cite[Lemma 2.3]{GaticaHH_BLM}.
Then choosing a piecewise smooth function $v$ with $v|_\bG=0$ we obtain
\be \label{IP}
   \<\lambda, [v]\>_\tau
   =
   \sum_{l=1}^L \<\lambda, [v]\>_{\gamma_l}
   =
   \<Wu,v\>_{\CT} - \<V\bcurlS{\G} u, \bcurlS{H} v\>_{\CT}.
\ee
Here, $\lambda$ denotes our Lagrangian multiplier on the skeleton $\gamma$
defined by
\be \label{def_lambda}
   \lambda|_{\gamma_l}:= \bt_{l\lag}\cdot(V\bcurlS{\G} u)|_{\gamma_l},
   \quad l=1, \ldots, L.
\ee
Relation (\ref{IP}) does not extend to $v\in H^{1/2}(\CT)$ since the trace
of such a function $v$ onto $\gamma$ is not well defined.
However, there holds the following lemma.

\begin{lemma} \label{la_IP}
For $u\in\tilde H^{1/2}(\G)$ with $Wu = f\in L^2(\G)$,
\rm{(\ref{IP})} defines $\lambda\in\prod_{l=1}^L H^{-s}(\gamma_l)$
for any $s\in (0,1/2]$.
\end{lemma}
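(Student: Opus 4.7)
The plan is to use (\ref{IP}) as the defining relation for $\lambda$ and to show that its right-hand side, viewed as a functional of $v$, extends boundedly to jumps in $\prod_l\tilde H^s(\gamma_l)$. Since for $s\in(0,1/2)$ the Sobolev scales $H^s(\gamma_l)$ and $\tilde H^s(\gamma_l)$ coincide with equivalent norms, bounding the functional on $\prod_l\tilde H^s(\gamma_l)$ yields $\lambda\in\prod_l(\tilde H^s(\gamma_l))^*=\prod_l H^{-s}(\gamma_l)$; the endpoint $s=1/2$ is handled by the same duality $(\tilde H^{1/2})^*=H^{-1/2}$ directly. Concretely I introduce
\[
   T(v) := \<f,v\>_\CT - \<V\bcurlS{\G} u,\bcurlS{H} v\>_\CT
\]
on $Y_s:=\{v\in H^{1/2+s}(\CT)\,;\ v|_\bG=0\}$ and first verify that $T(v)$ depends only on $[v]|_\gamma$: if $v_1,v_2\in Y_s$ share the same jumps then $v_1-v_2$ lies in $\tilde H^{1/2+s}(\G)\subset\tilde H^{1/2}(\G)$, and by (\ref{curl_tilde}) together with the identity $Wu=\curlS{\G}V\bcurlS{\G}u$ the two terms of $T(v_1-v_2)$ cancel.

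Next I construct a lifting of prescribed jumps. Given $\mu_l\in\tilde H^s(\gamma_l)$ for $l=1,\ldots,L$, on each sub-domain $\G_i$ I define the boundary datum $g_i$ on $\partial\G_i$ by $g_i|_{\gamma_l}:=\mu_l$ for those $l$ with $l\lag=i$, and $g_i=0$ on the remaining part of $\partial\G_i$ (in particular on $\bG\cap\bG_i$ and on any edges where $\G_i$ is on the mortar side). Because $\tilde H^s$-functions extend by zero to $H^s$ of a larger boundary, one has $g_i\in H^s(\partial\G_i)$ with $\|g_i\|^2_{H^s(\partial\G_i)}\lesssim\sum_{l:\,l\lag=i}\|\mu_l\|^2_{\tilde H^s(\gamma_l)}$. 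An inverse-trace lifting produces $v_i\in H^{1/2+s}(\G_i)$ with $v_i|_{\partial\G_i}=g_i$, and the global $v=(v_i)_i$ lies in $Y_s$ with $[v]=\mu$ on $\gamma$ and $\|v\|_{H^{1/2+s}(\CT)}\lesssim(\sum_l\|\mu_l\|^2_{\tilde H^s(\gamma_l)})^{1/2}$.

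Finally I bound $|T(v)|$ for this lifting. The data term satisfies $|\<f,v\>_\CT|\le\|f\|_{L^2(\G)}\|v\|_{L^2(\G)}$. For the curl term, $V\bcurlS{\G}u\in\bi{H}^{1/2}_\tg(\G)$ by (\ref{curl_tilde}) combined with the continuity of $V$; its restriction to $\G_i$ lies in $\bi{H}^{1/2-s}_\tg(\G_i)$, which by Lemma \ref{Lemma_5_Heuer_01_ApS} coincides with $\bi{\tilde H}^{1/2-s}_\tg(\G_i)$ (with equivalence constant of order $1/s$) and so pairs dually with $\bi{H}^{-1/2+s}_\tg(\G_i)$. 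Estimate (\ref{curl}) of Lemma \ref{la_curl} places $\bcurlS{\G_i}v_i$ in this latter space with norm $\lesssim\|v_i\|_{H^{1/2+s}(\G_i)}$. Summing over $i$ and combining with the lifting estimate gives $|T(v)|\lesssim C_s(\|f\|_{L^2(\G)}+\|u\|_{\tilde H^{1/2}(\G)})(\sum_l\|\mu_l\|^2_{\tilde H^s(\gamma_l)})^{1/2}$, which is the required bound.

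The main obstacle is the careful bookkeeping of tilde versus non-tilde fractional Sobolev spaces on sub-domains, since each sub-domain duality pairing only makes sense once one factor is placed in a $\tilde H^\sigma$-space via Lemma \ref{Lemma_5_Heuer_01_ApS}, at the price of a constant $\sim 1/s$; this blow-up is harmless because the lemma only asserts $s$-pointwise regularity. The borderline case $s=1/2$ is slightly delicate, since then $\tilde H^{1/2}(\gamma_l)\subsetneq H^{1/2}(\gamma_l)$ and the lifting genuinely relies on $\mu_l$ vanishing at the endpoints of $\gamma_l$ to guarantee $g_i\in H^{1/2}(\partial\G_i)$.
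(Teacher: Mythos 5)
Your proof is correct and follows essentially the same route as the paper: lift prescribed jumps from the interface edges into $H^{1/2+s}(\CT)$ via a sub-domain-wise trace lifting, then bound the right-hand side of \eqref{IP} by a duality estimate with $V\bcurlS{\G}u$ placed in $\bi{\tilde H}^{1/2-s}_\tg(\G_i)$ (using the $\tilde H/H$ equivalence of order $1/2-s\in[0,1/2)$) and $\bcurlS{\G_i}\tilde v_i$ in $\bi{H}^{s-1/2}_\tg(\G_i)$ via \eqref{curl}. The only genuine addition is your explicit cancellation argument for well-definedness (independence of the lifting), for which the paper instead defers to \cite{GaticaHH_BLM}; your observation that the $1/s$-blowup from Lemma~\ref{Lemma_5_Heuer_01_ApS} is harmless matches the paper's remark that the constants may depend on $s$.
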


\begin{remark}
The above lemma can be extended to values of $s$ larger than $1/2$.
Though small values of $s$ represent the interesting cases, the limit $s=0$
being excluded.
Also, the condition on $f$ can be relaxed but excluding the case
$f\in H^{-1/2}(\G)$ which is the standard regularity using the mapping
properties of the hypersingular operator.
\end{remark}

\noindent{\bf Proof of Lemma~\ref{la_IP}.}
We must show that $\lambda$ defined by (\ref{IP}) is a bounded linear
functional on $\prod_{l=1}^L \tilde H^{s}(\gamma_l)$, the dual space of
$\prod_{l=1}^L H^{-s}(\gamma_l)$.

Let $v\in \prod_{l=1}^L \tilde H^{s}(\gamma_l)$ be given.
We continuously extend $v$ to an element
\(
   \tilde v\in H^{s+1/2}(\CT) 
\)
with $\tilde v=0$ on $\bG$ such that $[\tilde v]|_{\gamma_l}=v|_{\gamma_l}$.
(Simply extend $v$ on each interface edge $\gamma_l$ to a function in
$H^{s+1/2}(\G_{l\lag})$ vanishing on $\partial\G_{l\lag}\setminus\gamma_l$
and extend by zero to the rest of $\G$. Then sum up with respect to $l$.)
The definition of $\lambda$ is independent of the particular
extension $\tilde v$, see \cite{GaticaHH_BLM} for details in the case of one
sub-domain. Using a duality estimate we obtain from (\ref{IP})
\bea \label{pf1}
   \sum_{l=1}^L \<\lambda, [v]\>_{\gamma_l}
   &=&
   \<f,\tilde v\>_{\CT} - \<V\bcurlS{\G} u, \bcurlS{H} \tilde v\>_{\CT}
   \nonumber
   \\
   &\le&
   \|f\|_{L^2(\G)} \|\tilde v\|_{L^2(\G)}
   +
   \sum^N_{i=1} \|V\bcurlS{\G} u\|_{\bi{\tilde H}^{1/2-s}_\tg(\G_i)}
                \|\bcurlS{\G_i}\tilde v_i\|_{\bi{H}^{s-1/2}_\tg(\G_i)}.
\eea
Now, for $s\in (0,1/2]$ the norms in $\bi{\tilde H}^{1/2-s}_\tg(\G_i)$ and
$\bi{H}^{1/2-s}_\tg(\G_i)$ are equivalent (cf., e.g., \cite{LionsMagenes})
so that together with the mapping property of $V$ \cite{Costabel_88_BIO},
\[
   V:\; \bi{\tilde H}^{-1/2-s}_\tg(\G) \to \bi{H}^{1/2-s}_\tg(\G),
\]
and (\ref{curl_tilde}) we obtain
\[
   \sum^N_{i=1} \|V\bcurlS{\G} u\|_{\bi{\tilde H}^{1/2-s}_\tg(\G_i)}^2
   \lesssim
   \sum^N_{i=1} \|V\bcurlS{\G} u\|_{\bi{H}^{1/2-s}_\tg(\G_i)}^2
   \lesssim
   \|V\bcurlS{\G} u\|_{\bi{H}^{1/2-s}_\tg(\G)}^2
   \lesssim
   \|u\|_{\tilde H^{1/2}(\G)}^2.
\]
Here, the appearing constants are independent of $u$ but may depend on $s$.
Also, using (\ref{curl}) we are able to bound
(with constant independent of $\tilde v$)
\[
   \sum_{i=1}^N
   \|\bcurlS{\G_i}\tilde v_i\|_{\bi{H}^{s-1/2}_\tg(\G_i)}^2
   \lesssim
   \sum_{i=1}^N
   \|\tilde v_i\|_{H^{s+1/2}(\G_i)}^2
   =
   \|\tilde v\|_{H^{s+1/2}(\CT)}^2.
\]
Taking the last two estimates into account, (\ref{pf1}) proves that
\[
   \sum_{l=1}^L \<\lambda, [v]\>_{\gamma_l}
   \lesssim
   \left( \|f\|_{L^2(\G)}
          +
          \|u\|_{\tilde H^{1/2}(\G)}
   \right) 
   \|\tilde v\|_{H^{s+1/2}(\CT)}.
\]
Using the continuity of the extension (with constant independent of $v$)
\[
   \|\tilde v\|_{H^{s+1/2}(\CT)}^2
   \lesssim
   \sum_{l=1}^L \|v\|_{\tilde H^s(\gamma_l)}^2
\]
finishes the proof.
\qed

\begin{lemma} \label{la_curl_pos}
\[
   \|\bcurlS{H} v\|_{\bi{\tilde H}^{-1/2}_\tg(\G)}^2
   \gtrsim
   |\log\uh|^{-1} \|v\|_{H^{1/2}(\CT)}^2
   \quad\forall v\in V_h
\]
\end{lemma}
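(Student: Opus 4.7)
The plan is to chain four ingredients. First I would use the Lagrangian condition $v\in V_h$ to kill the jump terms in Proposition~\ref{prop_poincare}, obtaining a Poincar\'e--Friedrichs bound for $\|v\|_{L^2(\G)}$ in terms of the broken seminorm $|v|_{H^{1/2+\epsilon}(\CT)}$. Second I would apply a standard inverse estimate for piecewise linears on the quasi-uniform meshes $\CT_i$ to replace $H^{1/2+\epsilon}$ by $H^{1/2}$ at the cost of a factor $\uh^{-\epsilon}$, and balance the two by choosing $\epsilon\simeq 1/|\log\uh|$. Third I would use bound (\ref{curl_semipos}) of Lemma~\ref{la_curl} on each sub-domain to replace $|v_i|_{H^{1/2}(\G_i)}$ by $\|\bcurlS{\G_i}v_i\|_{\bi H^{-1/2}_\tg(\G_i)}$. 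Finally I would control the resulting finite sum of local norms by the single global $\bi\tilde H^{-1/2}_\tg(\G)$-norm of $\bcurlS{H}v$ on the right-hand side.

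For the first step, every $v\in V_h$ satisfies $v|_{\bG}=0$ by definition of $X_h$, and $\int_{\gamma_l}[v]\,ds=0$ because the constant function $\mathbf{1}_{\gamma_l}$ lies in $M_{k,l}$ (it is piecewise constant on $\CG_l$), so testing $b(v,\cdot)=0$ against it gives the claim. Proposition~\ref{prop_poincare} then yields $\|v\|_{L^2(\G)}^2\lesssim \epsilon^{-1}|v|_{H^{1/2+\epsilon}(\CT)}^2$ for every $\epsilon\in(0,1/2]$, and combined with the inverse estimate this becomes $\|v\|_{L^2(\G)}^2 \lesssim \epsilon^{-1}\uh^{-2\epsilon}|v|_{H^{1/2}(\CT)}^2$. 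Choosing $\epsilon\simeq 1/|\log\uh|$ gives
\[
   \|v\|_{H^{1/2}(\CT)}^2 \lesssim |\log\uh|\,|v|_{H^{1/2}(\CT)}^2.
\]
Sub-domain-wise application of (\ref{curl_semipos}) then yields $|v|_{H^{1/2}(\CT)}^2\lesssim \sum_{i=1}^N \|\bcurlS{\G_i}v_i\|_{\bi H^{-1/2}_\tg(\G_i)}^2$. For the last step I would show that the restriction $\bi\psi\mapsto\bi\psi|_{\G_i}$ is a bounded map $\bi\tilde H^{-1/2}_\tg(\G)\to \bi H^{-1/2}_\tg(\G_i)$: for any $\bi\phi\in\bi\tilde H^{1/2}_\tg(\G_i)$ the zero extension $\bi\phi^0\in\bi\tilde H^{1/2}_\tg(\G)\hookrightarrow \bi H^{1/2}_\tg(\G)$ has controlled norm, so
\[
   |\langle\bi\psi|_{\G_i},\bi\phi\rangle_{\G_i}|
   = |\langle\bi\psi,\bi\phi^0\rangle_\G|
   \lesssim \|\bi\psi\|_{\bi\tilde H^{-1/2}_\tg(\G)}\,\|\bi\phi\|_{\bi\tilde H^{1/2}_\tg(\G_i)}.
\]
Applying this with $\bi\psi=\bcurlS{H}v$, whose restriction to $\G_i$ is exactly $\bcurlS{\G_i}v_i$, and summing over the fixed finite collection of sub-domains (whose number $N$ is absorbed in the $\lesssim$-constant because $\CT$ is fixed) gives $\sum_i \|\bcurlS{\G_i}v_i\|_{\bi H^{-1/2}_\tg(\G_i)}^2 \lesssim \|\bcurlS{H}v\|_{\bi\tilde H^{-1/2}_\tg(\G)}^2$.

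The delicate point is the last transition, because (\ref{curl_semipos}) delivers the non-tilde space $\bi H^{-1/2}_\tg(\G_i)$, so neither a direct local-to-global equivalence for $\bi\tilde H^{-1/2}$ (which fails for disjoint supports in fractional norms) nor an application of Lemma~\ref{Lemma_6_Heuer_01_ApS} can be used without introducing a spurious logarithmic factor. The observation that the restriction $\bi\tilde H^{-1/2}_\tg(\G)\to \bi H^{-1/2}_\tg(\G_i)$ is log-free sidesteps this issue entirely, and all of the logarithmic loss in the final estimate originates from the balancing of $\epsilon$ in the Poincar\'e--Friedrichs step.
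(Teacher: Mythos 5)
Your proposal is correct and follows essentially the same route as the paper: kill the jump terms via the constant test functions in $M_k$, apply the discrete Poincar\'e--Friedrichs inequality (Proposition~\ref{prop_poincare}), use the inverse estimate and balance $\epsilon\simeq|\log\uh|^{-1}$, then invoke (\ref{curl_semipos}) sub-domain-wise and control the local $\bi H^{-1/2}_\tg(\G_i)$-norms by the global $\bi{\tilde H}^{-1/2}_\tg(\G)$-norm. The one small value-added in your write-up is that you spell out the duality argument (via zero extension $\bi\phi\mapsto\bi\phi^0$) behind the first inequality of (\ref{pf2}), which the paper asserts without comment; this correctly explains why that restriction step is log-free.
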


\begin{proof}
By (\ref{curl_semipos}) there holds
\be \label{pf2}
   \|\bcurlS{H} v\|_{\bi{\tilde H}^{-1/2}_\tg(\G)}^2
   \gtrsim
   \sum_{i=1}^N \|\bcurlS{\G_i} v_i\|_{\bi{H}^{-1/2}_\tg(\G_i)}^2
   \gtrsim
   \sum_{i=1}^N |v_i|_{H^{1/2}(\G_i)}^2
   =
   |v|_{H^{1/2}(\CT)}^2
   \quad\forall v\in H^{1/2}(\CT).
\ee
For $v\in V_h$ there holds $\int_{\gamma_l}[v]\,ds=0$ for any interface edge
$\gamma_l$ since by construction $M_k$ contains the piecewise constant function
which has the value $1$ on $\gamma_l$ and vanishes on $\gamma\setminus\gamma_l$.
For the definition of $V_h$ (the discrete kernel of $b(\cdot,\cdot)$) see
(\ref{def_Vh}). Therefore, Proposition~\ref{prop_poincare} proves that
\[
   \|v\|^2_{L^2(\G)}
   \lesssim
   \epsilon^{-1} |v|_{H^{1/2+\epsilon}(\CT)}^2
   \quad\forall v\in V_h.
\]
Here, the appearing constant is independent of $\epsilon\in (0,1/2]$.
Making use of the inverse property we bound
\[
   \sum^N_{i=1} |v|_{H^{1/2+\epsilon}(\G_i)}^2
   \lesssim
   \sum^N_{i=1} h_i^{-2\epsilon} |v|_{H^{1/2}(\G_i)}^2
   \quad\forall v\in V_h
\]
so that, with the previous estimate,
\be \label{pf3}
   \|v\|^2_{L^2(\G)}
   \lesssim
   \epsilon^{-1} \uh^{-2\epsilon} |v|_{H^{1/2}(\CT)}^2
   \quad\forall v\in V_h.
\ee
Selecting $\epsilon=|\log\uh|^{-1}$ (for $\uh$ being small enough)
and combining (\ref{pf3}) with (\ref{pf2}) proves the statement.
\end{proof}

\begin{lemma} \label{la_a_ell}
The bilinear form $a(\cdot,\cdot)$ is almost uniformly $V_h$-elliptic.
More precisely there hold the lower bounds
\[
   a(v,v) \gtrsim |\log\uh|^{-1} \|v\|_{H^{1/2}(\CT)}^2 \quad\forall v\in V_h
\]
and
\be \label{a_ell}
   a(v,v) \gtrsim
   |\log\uh|^{-1/2}
   \|v\|_{H^{1/2}(\CT)} \|\bcurlS{H} v\|_{\bi{\tilde H}^{-1/2}_\tg(\G)}
   \quad\forall v\in V_h.
\ee
\end{lemma}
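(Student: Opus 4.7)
The plan is to obtain both bounds from two ingredients: the standard coercivity of the single layer potential $V$ on $\bi{\tilde H}^{-1/2}_\tg(\G)$, and Lemma~\ref{la_curl_pos}. The structure of $a(\cdot,\cdot)$ is
\[
   a(v,v)
   =
   \<V\bcurlS{H} v,\bcurlS{H} v\>_{\G},
\]
so once we argue that $\bcurlS{H}v$ lies in the energy space of $V$, everything reduces to bounding the $\bi{\tilde H}^{-1/2}_\tg(\G)$-norm of $\bcurlS{H}v$ from below by $\|v\|_{H^{1/2}(\CT)}$, which is exactly the content of Lemma~\ref{la_curl_pos}.

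First I would verify that $\bcurlS{H}v\in\bi{\tilde H}^{-1/2}_\tg(\G)$ for $v\in X_h$. Since $v|_{\G_i}$ is piecewise linear, each local curl $\bcurlS{\G_i}v_i$ is piecewise constant and hence in $\bi{L}^2_\tg(\G_i)$; extension by zero gives an element of $\bi{L}^2_\tg(\G)\hookrightarrow \bi{\tilde H}^{-1/2}_\tg(\G)$, and the same holds for the finite sum $\bcurlS{H}v$. Hence the $V$-coercivity bound
\[
   \<V\bvarphi,\bvarphi\>_\G
   \gtrsim
   \|\bvarphi\|_{\bi{\tilde H}^{-1/2}_\tg(\G)}^2
   \qquad
   \forall\bvarphi\in\bi{\tilde H}^{-1/2}_\tg(\G)
\]
(a classical property of the single layer operator on flat open surfaces, cf.\ \cite{Costabel_88_BIO}) is applicable to $\bvarphi=\bcurlS{H}v$, giving
\[
   a(v,v)\gtrsim \|\bcurlS{H}v\|_{\bi{\tilde H}^{-1/2}_\tg(\G)}^2.
\]

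The first claimed lower bound then follows by inserting Lemma~\ref{la_curl_pos} directly. For the mixed bound (\ref{a_ell}), I would write
\[
   \|\bcurlS{H}v\|_{\bi{\tilde H}^{-1/2}_\tg(\G)}^2
   =
   \|\bcurlS{H}v\|_{\bi{\tilde H}^{-1/2}_\tg(\G)}\cdot
   \|\bcurlS{H}v\|_{\bi{\tilde H}^{-1/2}_\tg(\G)}
\]
and apply Lemma~\ref{la_curl_pos} (in its square-root form, i.e.\ $\|\bcurlS{H}v\|_{\bi{\tilde H}^{-1/2}_\tg(\G)}\gtrsim|\log\uh|^{-1/2}\|v\|_{H^{1/2}(\CT)}$ for $v\in V_h$) to exactly one of the two factors. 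This yields (\ref{a_ell}).

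The only delicate point is the first step, namely that $V$-coercivity in $\bi{\tilde H}^{-1/2}_\tg(\G)$ is really applicable to a tangential field that is \emph{piecewise} defined via extensions by zero from the sub-domains; this is a function-space check rather than a serious obstacle, but it is worth making explicit because $v\in V_h$ is in general discontinuous across the skeleton. Aside from this, both estimates are essentially one-line consequences of Lemma~\ref{la_curl_pos} combined with the coercivity of $V$, so no additional machinery is needed.
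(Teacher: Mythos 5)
Your proof is correct and follows essentially the same route as the paper: reduce $a(v,v)$ to the $\bi{\tilde H}^{-1/2}_\tg(\G)$-norm of $\bcurlS{H}v$ via $V$-coercivity (after checking that $\bcurlS{H}v\in\bi{L}^2_\tg(\G)$, which the paper uses to identify the sub-domain sum $\<\cdot,\cdot\>_{\CT}$ with the global pairing $\<\cdot,\cdot\>_\G$), then invoke Lemma~\ref{la_curl_pos} twice for the first bound and once for (\ref{a_ell}). The only cosmetic difference is that the paper phrases the $L^2$-regularity step in terms of $V\bcurlS{H}v\in\bi{L}^2_\tg(\G)$ while you argue directly from $\bcurlS{H}v\in\bi{L}^2_\tg(\G)$; these are two sides of the same observation.
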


\begin{proof}
First we note that for $v\in V_h\subset L^2(\G)$ there holds
$V\bcurlS{H} v\in \bi{L}^2_\tg(\G)$ so that
\[
   \<V\bcurlS{H} v, \bcurlS{H} v\>_{\CT}
   =
   \<V\bcurlS{H} v, \bcurlS{H} v\>_{\G}.
\]
Using the ellipticity of
$V:\; \bi{\tilde H}^{-1/2}_\tg(\G) \to \bi{H}^{1/2}_\tg(\G)$
and Lemma~\ref{la_curl_pos} we then obtain for $v\in V_h$
\[
   a(v,v) = \<V\bcurlS{H} v, \bcurlS{H} v\>_\G 
   \gtrsim
   \|\bcurlS{H} v\|_{\bi{\tilde H}^{-1/2}_\tg(\G)}^2
   \gtrsim  
   |\log\uh|^{-1} \|v\|^2_{H^{1/2}(\CT)},
\]
which is the first assertion.
The estimate (\ref{a_ell}) is obtained by bounding
$\|\bcurlS{H} v\|_{\bi{\tilde H}^{-1/2}_\tg(\G)}$ only once with the help of
Lemma~\ref{la_curl_pos}.
\end{proof}

\begin{lemma} \label{la_a_cont}
The bilinear form $a(\cdot,\cdot)$ is almost uniformly continuous on $X_h$.
More precisely there holds
\[
   a(v,w)
   \lesssim
   |\log\uh|^2\, \|v\|_{H^{1/2}(\CT)} \|w\|_{H^{1/2}(\CT)}
   \quad\forall v,w\in X_h.
\]
\end{lemma}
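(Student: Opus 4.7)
The plan is to pass the bilinear form to a global duality pairing on $\G$, apply the mapping property of the single layer potential $V$, and then bound the critical quantity $\|\bcurlS{H}\cdot\|_{\bi{\tilde H}^{-1/2}_\tg(\G)}$ by reducing to a sub-domain-wise estimate where the logarithmic loss arises from having to measure curls of zero extensions in the tilde norm rather than the plain $H^{-1/2}$ norm.

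First I would observe that since $v,w\in X_h\subset L^2(\G)$, the fields $\bcurlS{H}v$ and $\bcurlS{H}w$ lie in $\bi{L}^2_\tg(\G)$, so the piecewise-defined pairing collapses to a global one:
\[
  a(v,w) = \<V\bcurlS{H}v,\bcurlS{H}w\>_\G.
\]
Using the continuity $V:\bi{\tilde H}^{-1/2}_\tg(\G)\to\bi{H}^{1/2}_\tg(\G)$ of \cite{Costabel_88_BIO} together with the duality between $\bi{H}^{1/2}_\tg(\G)$ and $\bi{\tilde H}^{-1/2}_\tg(\G)$, this yields
\[
  a(v,w) \lesssim \|\bcurlS{H}v\|_{\bi{\tilde H}^{-1/2}_\tg(\G)}\,\|\bcurlS{H}w\|_{\bi{\tilde H}^{-1/2}_\tg(\G)}.
\]
The theorem therefore reduces to showing that, for every $z\in X_h$,
\[
  \|\bcurlS{H}z\|_{\bi{\tilde H}^{-1/2}_\tg(\G)} \lesssim |\log\uh|\,\|z\|_{H^{1/2}(\CT)}.
\]

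For this, I would write $\bcurlS{H}z=\sum_{i=1}^N(\bcurlS{\G_i}z_i)^0$ and apply the triangle inequality (the sub-domain count $N$ is a fixed constant attached to $\CT$). For each term, testing $(\bcurlS{\G_i}z_i)^0$ against any $\psi\in\bi{H}^{1/2}_\tg(\G)$ reduces to pairing $\bcurlS{\G_i}z_i$ against $\psi|_{\G_i}$, which gives the elementary bound
\[
  \|(\bcurlS{\G_i}z_i)^0\|_{\bi{\tilde H}^{-1/2}_\tg(\G)} \le \|\bcurlS{\G_i}z_i\|_{\bi{\tilde H}^{-1/2}_\tg(\G_i)}.
\]
Since $z_i$ is piecewise linear on $\CT_i$, each component of $\bcurlS{\G_i}z_i$ is piecewise constant on the quasi-uniform mesh of width $h_i$, so Lemma~\ref{Lemma_6_Heuer_01_ApS} (applied componentwise, noting that its proof covers piecewise polynomials of any fixed degree) yields
\[
  \|\bcurlS{\G_i}z_i\|_{\bi{\tilde H}^{-1/2}_\tg(\G_i)} \lesssim |\log h_i|\,\|\bcurlS{\G_i}z_i\|_{\bi{H}^{-1/2}_\tg(\G_i)}.
\]
Finally the continuous $\bcurlS{\G_i}:H^{1/2}(\G_i)\to\bi{H}^{-1/2}_\tg(\G_i)$ from Lemma~\ref{la_curl} (with $s=0$) bounds the right-hand side by $\|z_i\|_{H^{1/2}(\G_i)}$. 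Summing over $i$ and using $|\log h_i|\le|\log\uh|$ gives the required estimate with a single logarithmic factor; combining it with the first display produces the stated $|\log\uh|^2$ bound.

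The only mild obstacle is keeping the book-keeping of norms straight: the piecewise-constant field $\bcurlS{\G_i}z_i$ must be measured in the \emph{tilde} norm $\bi{\tilde H}^{-1/2}_\tg(\G_i)$ in order for its extension by zero to $\G$ to live in $\bi{\tilde H}^{-1/2}_\tg(\G)$, yet the curl operator only maps into the plain $\bi{H}^{-1/2}_\tg(\G_i)$. The cost of this jump across the critical index $s=1/2$ is exactly one logarithm per factor, which is optimal and matches the logarithmic losses appearing elsewhere in the paper's analysis.
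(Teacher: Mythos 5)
Your proposal is correct and follows essentially the same route as the paper: collapse the $\CT$-pairing to a global one since $\bcurlS{H}v,\bcurlS{H}w\in\bi{L}^2_\tg(\G)$, invoke the mapping property of $V$, then localize to sub-domains and trade the tilde norm for the plain $H^{-1/2}$ norm at the cost of one $|\log h_i|$ per factor via Lemma~\ref{Lemma_6_Heuer_01_ApS} and the continuity (\ref{curl}) of $\bcurlS{\G_i}$. The only cosmetic difference is that you spell out the passage from the global tilde norm to sub-domain tilde norms via triangle inequality and a duality argument, whereas the paper packages the same step into the inequality $\|\cdot\|_{\bi{\tilde H}^{-1/2}_\tg(\G)}\lesssim\|\cdot\|_{\bi{\tilde H}^{-1/2}_\tg(\CT)}$.
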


\begin{proof}
As in the proof of Lemma~\ref{la_a_ell} we note that for
$v$, $w\in X_h\subset L^2(\G)$ there holds
\[
   \<V\bcurlS{H} v, \bcurlS{H} w\>_{\CT}
   =
   \<V\bcurlS{H} v, \bcurlS{H} w\>_{\G}.
\]
Then, using the continuity of
$V:\; \bi{\tilde H}^{-1/2}_\tg(\G) \to \bi{H}^{1/2}_\tg(\G)$
and the estimate for fractional order Sobolev norms
$\|\cdot\|_{\bi{\tilde H}^{-1/2}_\tg(\G)}\lesssim
 \|\cdot\|_{\bi{\tilde H}^{-1/2}_\tg(\CT)}$, we obtain for $v,w\in X_h$
\bea \label{pf4}
   a(v,w) = \<V\bcurlS{H} v,\bcurlS{H} w\>_\G
   &\lesssim&
   \|\bcurlS{H} v\|_{\bi{\tilde H}^{-1/2}_\tg(\G)}
   \|\bcurlS{H} w\|_{\bi{\tilde H}^{-1/2}_\tg(\G)}
   \nonumber
   \\
   &\lesssim&
   \|\bcurlS{H} v\|_{\bi{\tilde H}^{-1/2}_\tg(\CT)}
   \|\bcurlS{H} w\|_{\bi{\tilde H}^{-1/2}_\tg(\CT)}.
\eea
Now making use of Lemma~\ref{Lemma_6_Heuer_01_ApS} and (\ref{curl}) we bound
\[
   \|\bcurlS{\G_i} v_i\|_{\bi{\tilde H}^{-1/2}_\tg(\G_i)}^2
   \lesssim
   |\log h_i|^2 \|\bcurlS{\G_i} v_i\|_{\bi{H}^{-1/2}_\tg(\G_i)}^2
   \lesssim
   |\log h_i|^2 \|v_i\|_{H^{1/2}(\G_i)}^2,
\]
giving
\[
   \|\bcurlS{H} v\|_{\bi{\tilde H}^{-1/2}_\tg(\CT)}
   \lesssim
   |\log\uh|\, \|v\|_{H^{1/2}(\CT)} \quad\forall v\in X_h.
\]
Combination with (\ref{pf4}) proves the statement.
\end{proof}

For the eventual error estimate we need the boundedness of the bilinear
form $a(\cdot,\cdot)$. However, Lemma~\ref{la_a_cont} is not applicable to
non-discrete functions. Instead we will use the next lemma.

\begin{lemma} \label{la_a_cont2}
Assume that $u\in\tilde H^{1/2+r}(\G)$ ($r>0$). Then there holds
\[
   a(u-v,w)
   \lesssim
   s^{-1} \|u-v\|_{H^{1/2+s}(\CT)} 
   \|\bcurlS{H} w\|_{\bi{\tilde H}^{-1/2}_\tg(\CT)}
   \quad\forall v,w\in X_h,\ \forall s\in (0,\min\{r,1/2\}].
\]
In particular, the appearing constant is independent of $s$.
\end{lemma}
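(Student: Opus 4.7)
The plan is to recast $a(u-v,w)$ as a global $\G$-pairing, exploit symmetry of the single-layer operator $V$ to swap which factor $V$ acts on, and then estimate sub-domain by sub-domain via a duality with Sobolev indices $-1/2+s$ and $1/2-s$. The $s^{-1}$ blow-up will enter exclusively through Lemma~\ref{Lemma_5_Heuer_01_ApS}, applied to compare the $\bi{\tilde H}^{1/2-s}_\tg(\G_i)$ and $\bi{H}^{1/2-s}_\tg(\G_i)$ norms of $V\bcurlS{H} w$; every other constant in the chain must be kept uniformly bounded for $s\in (0,1/2]$.

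First I would note that under the hypothesis $u\in\tilde H^{1/2+r}(\G)$ and $v\in X_h$, the distribution $\bcurlS{H}(u-v) = \sum_i (\bcurlS{\G_i}(u_i-v_i))^0$ is a well-defined element of $\bi{H}^{-1/2+s}_\tg(\G)$ (combining (\ref{curl}) with the equivalence of $\tilde H$ and $H$ norms in that negative range via Lemma~\ref{Lemma_5_Heuer_01_ApS}), while $\bcurlS{H} w\in \bi{L}^2_\tg(\G)$. Symmetry of $V$ then yields
\[
a(u-v,w) = \<V\bcurlS{H}(u-v), \bcurlS{H} w\>_\G = \<\bcurlS{H}(u-v), V\bcurlS{H} w\>_\G = \sum_{i=1}^N \<\bcurlS{\G_i}(u_i-v_i), V\bcurlS{H} w\>_{\G_i}.
\]

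On each $\G_i$ I would use the duality
\[
|\<\bcurlS{\G_i}(u_i-v_i), V\bcurlS{H} w\>_{\G_i}| \le \|\bcurlS{\G_i}(u_i-v_i)\|_{\bi{H}^{-1/2+s}_\tg(\G_i)}\, \|V\bcurlS{H} w\|_{\bi{\tilde H}^{1/2-s}_\tg(\G_i)},
\]
bound the first factor by $\|u_i - v_i\|_{H^{1/2+s}(\G_i)}$ via (\ref{curl}), and apply Lemma~\ref{Lemma_5_Heuer_01_ApS} with index $1/2-s$, so that $1/2 - |1/2-s| = s$ produces exactly the constant $C/s$ on the second factor. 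Summing over $i$ with Cauchy--Schwarz and using the subadditivity $\sum_i \|\cdot\|^2_{H^{1/2-s}(\G_i)} \le \|\cdot\|^2_{H^{1/2-s}(\G)}$ (which follows directly from the definition of the Sobolev--Slobodeckij seminorm) yields
\[
a(u-v,w) \lesssim s^{-1} \|u-v\|_{H^{1/2+s}(\CT)}\, \|V\bcurlS{H} w\|_{\bi{H}^{1/2-s}_\tg(\G)}.
\]

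To close the estimate I would chain three ingredients: the continuous embedding $\bi{H}^{1/2}_\tg(\G) \hookrightarrow \bi{H}^{1/2-s}_\tg(\G)$ (uniform in $s\in(0,1/2]$ since $\G$ is bounded), the mapping property $V:\bi{\tilde H}^{-1/2}_\tg(\G) \to \bi{H}^{1/2}_\tg(\G)$, and the inequality $\|\bcurlS{H} w\|_{\bi{\tilde H}^{-1/2}_\tg(\G)} \lesssim \|\bcurlS{H} w\|_{\bi{\tilde H}^{-1/2}_\tg(\CT)}$ invoked already in the proof of Lemma~\ref{la_a_cont}. The main obstacle is really just bookkeeping: all constants other than the one from Lemma~\ref{Lemma_5_Heuer_01_ApS} must be $s$-independent, which is why the latter lemma is used at exactly one step and $V$ is employed only on the fixed space $\bi{\tilde H}^{-1/2}_\tg(\G)$ rather than on $s$-dependent spaces.
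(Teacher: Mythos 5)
Your proof is correct, but it takes a genuinely different route from the paper's. The paper starts from the continuity estimate (\ref{pf4}) (which already isolates $\|\bcurlS{H}w\|_{\bi{\tilde H}^{-1/2}_\tg(\CT)}$), and then works purely on the $u-v$ factor: on each $\G_i$ it uses the embedding $\bi{\tilde H}^{-1/2+s}_\tg(\G_i)\hookrightarrow\bi{\tilde H}^{-1/2}_\tg(\G_i)$ followed by Lemma~\ref{Lemma_5_Heuer_01_ApS} \emph{at the negative index} $-1/2+s$ (again giving $1/2-|{-1/2+s}|=s$, hence $C/s$), and then (\ref{curl}) maps $H^{1/2+s}(\G_i)\to\bi{H}^{-1/2+s}_\tg(\G_i)$. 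You instead invoke the symmetry of $V$ to move it onto the $w$ factor, do a sub-domain duality with indices $-1/2+s$ and $1/2-s$, and extract $s^{-1}$ from Lemma~\ref{Lemma_5_Heuer_01_ApS} \emph{at the positive index} $1/2-s$ applied to $V\bcurlS{H}w$; you then need the extra steps of Sobolev subadditivity, the $V$-mapping property, and the reassembly $\|\cdot\|_{\bi{\tilde H}^{-1/2}_\tg(\G)}\lesssim\|\cdot\|_{\bi{\tilde H}^{-1/2}_\tg(\CT)}$ to recover the stated bound. Both routes hinge on the same quantitative observation --- Lemma~\ref{Lemma_5_Heuer_01_ApS} at an index with $|s'|=1/2-s$ yields exactly $C/s$ --- applied on opposite sides of the pairing. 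The paper's version is shorter and avoids the distributional bookkeeping needed to justify the global $\G$-pairing and the $V$-swap for the non-smooth factor $\bcurlS{H}(u-v)$; your version, once those technical points are spelled out (they do go through, since $V\bcurlS{H}(u-v)\in\bi{L}^2_\tg(\G)$ under the stated regularity), is an equally valid alternative.
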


\begin{proof}
First we note that (\ref{pf4}) holds also for continuous functions, so that
\[
   a(u-v,w)
   \lesssim
   \|{\bcurlS{H} (u - v)}\|_{\bi{\tilde H}^{-1/2}_\tg(\CT)}
   \|\bcurlS{H} w\|_{\bi{\tilde H}^{-1/2}_\tg(\CT)}
   \quad\forall v, w\in X_h.
\]
Using the continuous injection
$\bi{\tilde H}^{-1/2+s}_\tg(\G_i)\to \bi{\tilde H}^{-1/2}_\tg(\G_i)$
and Lemma~\ref{Lemma_5_Heuer_01_ApS} we bound for $i\in\{1, \ldots, N\}$
\[
   \|\bcurlS{\G_i}(u_i-v_i)\|_{\bi{\tilde H}^{-1/2}_\tg(\G_i)} 
   \le
   \|\bcurlS{\G_i}(u_i-v_i)\|_{\bi{\tilde H}^{-1/2+s}_\tg(\G_i)} 
   \lesssim
   s^{-1}
   \|\bcurlS{\G_i}(u_i-v_i)\|_{\bi{H}^{-1/2+s}_\tg(\G_i)}.
\]
The continuity of
$\bcurlS{\G_i}:\; H^{1/2+s}(\G_i)\to \bi{H}^{-1/2+s}_\tg(\G_i)$
for any $i\in\{1, \ldots, N\}$ by (\ref{curl}) finishes the proof.
\end{proof}

In order to analyze the error bound of the Strang-type estimate
by Theorem~\ref{thm_Strang} below, we need to extend functions from
interface edges to sub-domains.
This is also required to prove an inf-sup condition for the bilinear
form $b(\cdot,\cdot)$.

To this end let us define extension operators that extend piecewise
linear functions from interface edges to piecewise (bi)linear functions
on the corresponding Lagrangian sub-domain,
\be \label{def_El}
   E_l:\; X_{h,l\lag}|_{\bar\gamma_l} \to X_{h,l\lag},
   \quad l=1, \ldots, L.
\ee
Here, for $v\in X_{h,l\lag}|_{\bar\gamma_l}$ the extension $E_lv$ is
defined as the function of $X_{h,l\lag}$ that coincides with $v$ in the
nodes on $\bar\gamma_l$ stemming from the mesh $\CT_{l\lag}$ and is zero
in the remaining nodes of $\CT_{l\lag}$.

\begin{lemma} \label{la_E}
\[
   \|E_lv\|_{H^s(\G_{l\lag})}
   \lesssim
   h_{l\lag}^{1/2-s}\|v\|_{L^2(\gamma_l)}
   \quad\forall v\in X_{h,l\lag}|_{\bar\gamma_l},\ \forall s\in [0,1],
   \quad l=1, \ldots, L.
\]
In particular, the appearing constant is independent of $s$.
\end{lemma}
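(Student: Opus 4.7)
The plan is to establish the two endpoint bounds $s=0$ and $s=1$ by an explicit nodal-basis and scaling argument, and then to pass to the full range $s\in(0,1)$ by an interpolation inequality whose constant does not depend on $s$.

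To set things up, I would write $E_l v = \sum_j v(x_j)\phi_j$, where $\{x_j\}$ are the nodes of $\CT_{l\lag}$ lying on $\bar\gamma_l$ and $\phi_j\in X_{h,l\lag}$ are the associated nodal basis functions. By construction $E_l v$ is supported in the strip $R_l$ formed by those elements of $\CT_{l\lag}$ that touch $\gamma_l$, and vanishes at every node of $\CT_{l\lag}$ outside $\bar\gamma_l$. On each element $T\subset R_l$, $E_l v|_T$ lies in a finite-dimensional polynomial space, so pulling back to a reference element and using shape-regularity of $\CT_{l\lag}$ yields
\[
   \|E_l v\|_{L^2(T)}^2 \simeq h_{l\lag}^2 \sum_{x_j\in\bar T\cap\bar\gamma_l} v(x_j)^2,
   \qquad
   |E_l v|_{H^1(T)}^2 \simeq \sum_{x_j\in\bar T\cap\bar\gamma_l} v(x_j)^2.
\]
Summing over $T\subset R_l$ (each $x_j$ belongs to $O(1)$ elements) and using the one-dimensional mass-matrix scaling $\sum_j v(x_j)^2 \simeq h_{l\lag}^{-1}\|v\|_{L^2(\gamma_l)}^2$, which is valid because $v$ is piecewise linear on the quasi-uniform trace mesh of $\gamma_l$, I obtain the two endpoint bounds
\[
   \|E_l v\|_{L^2(\G_{l\lag})} \lesssim h_{l\lag}^{1/2}\,\|v\|_{L^2(\gamma_l)},
   \qquad
   \|E_l v\|_{H^1(\G_{l\lag})} \lesssim h_{l\lag}^{-1/2}\,\|v\|_{L^2(\gamma_l)}.
\]

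For $s\in(0,1)$ I would then interpolate between these. By the K-method, the interpolation inequality
\[
   \|u\|_{[L^2(\G_{l\lag}),\,H^1(\G_{l\lag})]_{s,2}}
   \lesssim
   \|u\|_{L^2(\G_{l\lag})}^{1-s}\,\|u\|_{H^1(\G_{l\lag})}^{s}
\]
holds with a constant independent of $s\in(0,1)$, and on the Lipschitz domain $\G_{l\lag}$ the interpolation space $[L^2,H^1]_{s,2}$ coincides with $H^s(\G_{l\lag})$ with norm equivalence constants that are uniform for $s\in[0,1]$. Combining this with the two endpoint bounds produces
\[
   \|E_l v\|_{H^s(\G_{l\lag})}
   \lesssim
   (h_{l\lag}^{1/2})^{1-s}(h_{l\lag}^{-1/2})^{s}\,\|v\|_{L^2(\gamma_l)}
   =
   h_{l\lag}^{1/2-s}\,\|v\|_{L^2(\gamma_l)},
\]
which is the desired estimate.

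The main obstacle is the \emph{$s$-independence} of the constant, which is delicate near the endpoints $s=0$ and $s=1$; in particular, the factor $1/(1/2-|s|)$ appearing in Lemma~\ref{Lemma_5_Heuer_01_ApS} must not enter here. The cleanest route, sketched above, is to treat $H^s(\G_{l\lag})$ as the real interpolation space, in which case the K-method delivers a genuinely $s$-uniform interpolation inequality. If one prefers to work directly with the Slobodeckij definition, an alternative is to estimate $|E_l v|_{H^s(\G_{l\lag})}^2$ by splitting the double integral according to the distance of $x$ and $y$ to $\gamma_l$, exploiting that $E_l v$ is supported in a tube of width $\sim h_{l\lag}$ and has nodal values controlled by $v$; the geometric sums in the normal and tangential directions to $\gamma_l$ are then uniformly summable in $s\in[0,1]$ and yield the same bound.
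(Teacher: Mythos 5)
Your proof follows the same route as the paper's: explicit scaling estimates for the endpoints $s=0$ and $s=1$, then interpolation. The paper is terser on both steps (``equivalence of norms in finite dimensional spaces and scaling'' for the endpoints; ``the result then follows by interpolation'' for the rest), but the content is identical.

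Where you go further — and this is valuable — is by explicitly flagging the $s$-uniformity of the constant as the delicate point. However, the specific resolution you propose does not hold near $s=1$. With the Sobolev--Slobodeckij norm the paper uses, the Brezis--Mironescu limit gives $(1-s)\,|w|_{H^s(\G_{l\lag})}^2 \to c\,\|\nabla w\|_{L^2(\G_{l\lag})}^2$ as $s\to 1^-$; so for a fixed $h$ and a fixed nonconstant extension $w=E_lv$, the left-hand side of the asserted estimate diverges like $(1-s)^{-1/2}$ while the right-hand side $h_{l\lag}^{1/2-s}\|v\|_{L^2(\gamma_l)}$ remains bounded. Hence neither the K-method (whose endpoint-interpolation constant with the standard real-interpolation norm is $\sim (s(1-s))^{-1/2}$) nor the equivalence $[L^2,H^1]_{s,2}\simeq H^s$ is $s$-uniform over all of $[0,1]$ in the Slobodeckij scale, and your alternative direct splitting of the double integral cannot repair this either, since the target inequality itself fails at $s=1^-$. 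The paper's claim of an $s$-independent constant on all of $[0,1]$ has the same defect. It is harmless for the paper's applications, because Lemma~\ref{la_E} is only invoked with exponents of the form $1/2+s$ where $s$ is eventually set to $|\log\uh|^{-1}$, hence bounded away from $1$; but either the statement should be restricted to a compact subinterval of $[0,1)$, or one should work with appropriately normalized interpolation norms.
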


\begin{proof}
Using the equivalence of norms in finite dimensional spaces and scaling
properties of the $L^2$-norm one obtains, by taking into account the
construction of $E_l$,
\[
   \|E_lv\|_{L^2(\G_{l\lag})}^2
   \lesssim
   h_{l\lag} \|v\|_{L^2(\gamma_l)}^2
   \quad\forall v\in X_{h,l\lag}|_{\bar\gamma_l}.
\]
Analogously we find
\beas
   \|E_lv\|_{H^1(\G_{l\lag})}^2
   &=&
   \|E_lv\|_{L^2(\G_{l\lag})}^2 + |E_lv|_{H^1(\G_{l\lag})}^2
   \\
   &\lesssim&
   h_{l\lag} \|v\|_{L^2(\gamma_l)}^2
   +
   h_{l\lag}^{-1} \|v\|_{L^2(\gamma_l)}^2
   \lesssim
   h_{l\lag}^{-1} \|v\|_{H^1(\gamma_l)}^2
   \quad\forall v\in X_{h,l\lag}|_{\bar\gamma_l}.
\eeas
The result then follows by interpolation.
\end{proof}

\begin{lemma} \label{la_LBB}
The bilinear form $b(\cdot,\cdot)$ satisfies the discrete inf-sup condition
\[
   \exists\beta>0:\quad
   \sup_{v\in X_h\setminus\{0\}}
   \frac{b(v,\mu)}{\|v\|_{H^{1/2}(\CT)}} \ge \beta\, \|\mu\|_{L^2(\gamma)}
   \quad\forall \mu\in M_k.
\]
Here, the constant $\beta$ is independent of $h$ and $k$ subject to
the assumptions made on the meshes.
\end{lemma}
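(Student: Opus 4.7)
The plan is to establish the inf-sup condition by an explicit construction: given $\mu\in M_k$, I would exhibit a specific $v\in X_h$ achieving $b(v,\mu)\gtrsim \|\mu\|_{L^2(\gamma)}^2$ together with $\|v\|_{H^{1/2}(\CT)}\lesssim\|\mu\|_{L^2(\gamma)}$; dividing the two bounds then yields the claim with $h$- and $k$-independent constant $\beta$. The construction proceeds edge-by-edge on the skeleton and is lifted into the sub-domains via the extension operators $E_l$ of~(\ref{def_El}).

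First, on each interface edge $\gamma_l$, writing $\mu_l:=\mu|_{\gamma_l}$, I would build a piecewise linear trace $v_l\in X_{h,l\lag}|_{\bar\gamma_l}$ vanishing at every node of the coarse multiplier mesh $\CG_l$ (in particular at both endpoints of $\gamma_l$) such that
\[
   \int_{\gamma_l} v_l\,\mu_l\,ds \,\gtrsim\, \|\mu_l\|_{L^2(\gamma_l)}^2,
   \qquad
   \|v_l\|_{L^2(\gamma_l)} \,\lesssim\, \|\mu_l\|_{L^2(\gamma_l)}.
\]
Concretely, on each coarse element $J\in\CG_l$ with $\mu_l|_J=c_J$, I would let $v_l|_J$ take the value $c_J$ at every fine node of $\CT_{l\lag}|_{\gamma_l}$ strictly interior to $J$ and the value zero at the two coarse endpoints of $J$. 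Assumption~(A2) guarantees that each $J$ contains enough fine intervals so that the two linear ``ramps'' at its ends consume only a bounded fraction of $|J|$; an elementary 1D calculation then gives the two bounds on each $J$, and summation over $J\in\CG_l$ delivers them on all of $\gamma_l$.

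Second, I would apply Lemma~\ref{la_E} with $s=1/2$ to obtain $\|E_l v_l\|_{H^{1/2}(\G_{l\lag})}\lesssim\|v_l\|_{L^2(\gamma_l)}$ (the exponent $h_{l\lag}^{1/2-s}$ equals one) and assemble
\[
  v|_{\G_i} \,:=\, \sum_{l\,:\,l\lag=i} E_l v_l,\qquad i=1,\ldots,N.
\]
Because each $v_l$ vanishes at the endpoints of $\gamma_l$ and each $E_{l'}v_{l'}$ is supported, on the boundary of $\G_{l'\lag}$, only in $\bar\gamma_{l'}$, different contributions do not interfere at cross-points, so $v_{l\lag}|_{\gamma_l}=v_l$ exactly. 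Assumption~(A1) forces $\gamma_l$ to be the unique interface between $\G_{l\lag}$ and $\G_{l\mor}$, and the same endpoint-vanishing kills any mortar-side contribution, giving $v_{l\mor}|_{\gamma_l}=0$ and hence $[v]|_{\gamma_l}=v_l$ for every $l$. Consequently
\[
   b(v,\mu)=\sum_{l=1}^L \int_{\gamma_l} v_l\,\mu_l\,ds \,\gtrsim\, \|\mu\|_{L^2(\gamma)}^2,
\]
while, since each sub-domain has at most four interface edges on which it is the Lagrangian side, the local stability of $E_l$ combined with Step~1 yields $\|v\|_{H^{1/2}(\CT)}^2\lesssim\sum_l\|v_l\|_{L^2(\gamma_l)}^2\lesssim\|\mu\|_{L^2(\gamma)}^2$.

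The main obstacle I expect is Step~1: making the one-dimensional construction simultaneously vanish at all coarse nodes (so that the global assembly across cross-points remains clean) and still capture a uniform positive fraction of $\|\mu_l\|_{L^2(J)}^2$ on each coarse element. This is precisely the role of~(A2), and everything after Step~1 is essentially book-keeping with Lemma~\ref{la_E}, assumption~(A1) and the product structure of $X_h$ and $M_k$.
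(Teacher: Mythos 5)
Your proposal is correct and takes essentially the same route as the paper: construct on each $\gamma_l$ a piecewise-linear trace in $X_{h,l\lag}|_{\bar\gamma_l}$ vanishing at the nodes of $\CG_l$, with $\<v_l,\mu\>_{\gamma_l}\simeq\|\mu\|_{L^2(\gamma_l)}^2\simeq\|v_l\|_{L^2(\gamma_l)}^2$, lift it into $\G_{l\lag}$ via $E_l$ and apply Lemma~\ref{la_E} at $s=1/2$, exactly as the paper does. The only difference is cosmetic: the paper uses a single hat (value $\mu|_J$ at one interior fine node per $J$) while you use a plateau (value $\mu|_J$ at every interior fine node of $J$); both satisfy the two required one-dimensional bounds under (A2), and the rest of the argument coincides.
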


\begin{proof}
Let $\mu\in M_k$ be given. On each interface edge $\gamma_l$, $\mu$
is a piecewise constant function on $\CG_l$, a mesh that is coarser than
the trace mesh $\CT_{l\lag}|_{\bar\gamma_l}$ stemming from the Lagrangian
side $\G_{l\lag}$, cf. assumption (A2).
On $\gamma_l$ we construct a piecewise linear function
$w_l\in X_{h,{l\lag}}|_{\gamma_l}$ in the following way. For each element
$J\in\CG_l$, $w_l$ vanishes at the endpoints of $J$, coincides with $\mu$ at
one interior node of $J$ and is linearly interpolated elsewhere on $\gamma_l$.
See Figure~\ref{interfacemesh2} for an example where $\mu$ is represented
by the dashed line and $w_l$ by the solid line.
The bullets indicate the nodes of the mesh for the Lagrangian multiplier
and the dashes indicate additional nodes of the trace mesh
(from the Lagrangian multiplier side).

We then extend $w_l$ to $\tilde w_l$ in $X_{h}$ by first extending to
$E_lw_l\in X_{h,{l\lag}}$, cf.~(\ref{def_El}), and then further by zero
onto $\Gamma$. Eventually we define $v:=\sum_{l=1}^L \tilde w_l$.

Note that $\tilde w_l$ vanishes on all interface edges except $\gamma_l$.
The trace of $\tilde w_l$ onto $\gamma_l$ from $\G_{l\lag}$ equals $w_l$
whereas the trace coming from the other side $\G_{l\mor}$ vanishes. This
yields
\be \label{pf5}
   [v]=[\tilde w_l]=w_l \quad \mbox{on}\quad \gamma_l, \qquad l=1, \ldots, L.
\ee
By the construction of $w_l$ there holds, uniformly for $\mu\in M_k$,
\be \label{pf6}
   \|\mu\|_{L^2(\gamma_l)}^2
   \simeq
   \<w_l ,\mu\>_{\gamma_l} \simeq \|w_l\|_{L^2(\gamma_l)}^2,
   \quad l=1, \ldots, L.
\ee
Also, taking into account that each sub-domain $\G_i$ has a limited number of
(interface) edges, determined by the relation $l\in\{1,\ldots,L\}:\;l\lag=i$,
Lemma~\ref{la_E} yields
\bea \label{pf7}
   \|v\|_{H^{1/2}(\CT)}^2
   &=&
   \sum_{i=1}^N\;
   \Bigl\|\sum_{l\in\{1,\ldots,L\}:\; l\lag=i} E_lw_l
   \Bigr\|_{H^{1/2}(\G_i)}^2
   \nonumber
   \\
   &\lesssim&
   \sum_{i=1}^N
   \sum_{l\in\{1,\ldots,L\}:\; l\lag=i}
   \|E_lw_l\|_{H^{1/2}(\G_i)}^2
   =
   \sum_{l=1}^L
   \|E_lw_l\|_{H^{1/2}(\G_{l\lag)}}^2
   \lesssim
   \sum_{l=1}^L
   \|w_l\|_{L^2(\gamma_l)}^2
\eea
Now, using (\ref{pf5}), (\ref{pf6}) and (\ref{pf7}), we finish the proof
by bounding
\[
   b(v,\mu)
   =
   \sum_{l=1}^L \<[v], \mu\>_{\gamma_l}
   =
   \sum_{l=1}^L \<w_l ,\mu\>_{\gamma_l} 
   \simeq
   \|\mu\|_{L^2(\gamma)}
   \Bigl(\sum_{l=1}^L \|w_l\|_{L^2(\gamma_l)}^2\Bigr)^{1/2}
   \gtrsim
   \|\mu\|_{L^2(\gamma)} \|v\|_{H^{1/2}(\CT)}.
\]
\end{proof}

\begin{figure}[htb] 
\begin{center}
\includegraphics[width=0.7\textwidth]{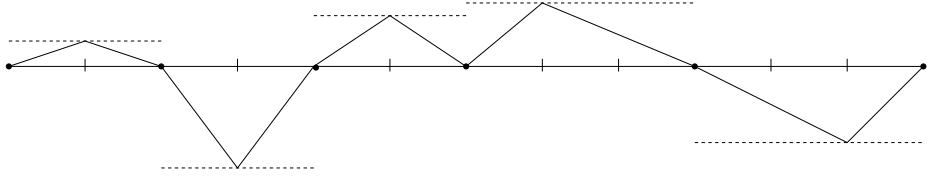}
\end{center}
\caption{Construction of $w_l$ in the proof of Lemma~\ref{la_LBB}.}
\label{interfacemesh2}			
\end{figure}

\begin{lemma} \label{la_bound_jump}
\[
   \|[v]\|_{L^2(\gamma)}^2
   \lesssim
   |\log\uh|\, \|v\|_{H^{1/2}(\CT)}^2
   \quad\forall v\in X_h
\]
\end{lemma}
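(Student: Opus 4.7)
\textbf{Proof plan for Lemma~\ref{la_bound_jump}.}
The natural approach is to use the trace inequality of Lemma~\ref{la_trace} to control the $L^2$ norm of $v_i$ on $\bG_i$ by $\|v_i\|_{H^{1/2+\eps}(\G_i)}$, then remove the excess regularity via an inverse inequality, and finally tune $\eps$ to minimize the blow-up. The logarithm will appear precisely from this balancing.

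First I would split the jump: since $[v]|_{\gamma_l}=v_{l\lag}|_{\gamma_l}-v_{l\mor}|_{\gamma_l}$, we have
\[
   \|[v]\|_{L^2(\gamma)}^2
   \le 2\sum_{l=1}^L \bigl(\|v_{l\lag}\|_{L^2(\gamma_l)}^2+\|v_{l\mor}\|_{L^2(\gamma_l)}^2\bigr)
   \lesssim \sum_{i=1}^N \|v_i\|_{L^2(\bG_i)}^2,
\]
where in the last step I use that each $\G_i$ has a uniformly bounded number of interface edges, and that $v_i$ vanishes on $\bG\cap\bG_i$ by definition of $X_{h,i}$, so the boundary $L^2$ mass of $v_i$ is concentrated on the interface edges. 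Applying Lemma~\ref{la_trace} on each $\G_i$ (with any $\eps\in(0,1/2)$) gives
\[
   \|v_i\|_{L^2(\bG_i)}^2\ \lesssim\ \eps^{-1}\,\|v_i\|_{H^{1/2+\eps}(\G_i)}^2.
\]

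Next I would invoke the standard inverse inequality for piecewise linear functions on the quasi-uniform mesh $\CT_i$, obtained by interpolation between $H^{1/2}(\G_i)$ and $H^1(\G_i)$ together with the usual $H^1$-to-$H^{1/2}$ inverse estimate:
\[
   \|v_i\|_{H^{1/2+\eps}(\G_i)}\ \lesssim\ h_i^{-\eps}\,\|v_i\|_{H^{1/2}(\G_i)}
   \quad\forall v_i\in X_{h,i},
\]
with constant independent of $\eps$. Combining the two bounds and using $h_i^{-2\eps}\le \uh^{-2\eps}$ yields
\[
   \|[v]\|_{L^2(\gamma)}^2\ \lesssim\ \eps^{-1}\,\uh^{-2\eps}\,\|v\|_{H^{1/2}(\CT)}^2.
\]

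Finally, the same calibration used in the proof of Lemma~\ref{la_curl_pos}, namely $\eps=|\log\uh|^{-1}$, gives $\uh^{-2\eps}=e^2$ and $\eps^{-1}=|\log\uh|$, which delivers the asserted estimate. The only delicate point is verifying that the constant in the fractional inverse estimate does not itself blow up as $\eps\to 0$; this is not an issue because interpolation between $H^{1/2}$ and $H^1$ produces a constant that is uniformly bounded on any compact sub-interval of $\eps\in[0,1/2]$, and the value $\eps=|\log\uh|^{-1}$ stays bounded away from $1/2$.
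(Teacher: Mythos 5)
Your proof is correct and follows essentially the same route as the paper: bound the jump by traces of $v_i$, apply Lemma~\ref{la_trace} to get the $\eps^{-1}$ factor, use the fractional inverse inequality to descend from $H^{1/2+\eps}$ to $H^{1/2}$ with a factor $\uh^{-2\eps}$, and calibrate $\eps=|\log\uh|^{-1}$. The only cosmetic difference is that you regroup the interface edges by sub-domain and apply the trace inequality once per $\G_i$, whereas the paper keeps the sum indexed by interface edges; the two are equivalent since each sub-domain has a uniformly bounded number of edges.
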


\begin{proof}
By the triangle inequality and Lemma~\ref{la_trace} there holds
uniformly for $\epsilon\in (0,1/2)$
\beas
   \|[v]\|_{L^2(\gamma)}^2
   &\lesssim&
   \sum_{l=1}^L
   \left( \|v_{l\lag}\|_{L^2(\gamma_l)}^2
          + 
          \|v_{l\mor}\|_{L^2(\gamma_l)}^2
   \right) \\
   &\lesssim&
   \epsilon^{-1}
   \sum_{l=1}^L
   \left( \|v_{l\lag}\|_{H^{1/2+\epsilon}(\G_{l\lag})}^2
          + 
          \|v_{l\mor}\|_{H^{1/2+\epsilon}(\G_{l\mor})}^2
   \right)
   \lesssim
   \epsilon^{-1} \|v\|_{H^{1/2+\epsilon}(\CT)}^2
   \quad\forall v\in X_h.
\eeas
The inverse property, applied separately to $v_i=v|_{\G_i}$, yields
\[
   \|v\|_{H^{1/2+\epsilon}(\CT)}^2
   \lesssim
   \uh^{-2\epsilon} \|v\|_{H^{1/2}(\CT)}^2
   \quad\forall v\in X_h
\]
and selecting $\epsilon=|\log\uh|^{-1}$ (for $\uh$ being small enough)
finishes the proof.
\end{proof}

\begin{lemma} \label{la_b_cont}
The bilinear form $b(\cdot,\cdot)$ is almost uniformly discretely continuous,
in the sense that
\be \label{b_cont1}
   b(v, \mu)
   \lesssim
   |\log\uh|^{1/2}\, \|v\|_{H^{1/2}(\CT)} \|\mu\|_{L^2(\gamma)}
   \quad\forall v\in X_h, \forall \mu\in M_k.
\ee
Moreover, for given $\psi\in L^2(\gamma)$, there holds
\be \label{b_cont2}
   b(v, \psi)
   \lesssim
   |\log\uh|\, \inf_{\mu\in M_k} \|\psi - \mu\|_{L^2(\gamma)} 
   \|\bcurlS{H} v\|_{\bi{\tilde H}^{-1/2}_\tg(\G)}
   \quad\forall v\in V_h.  
\ee
\end{lemma}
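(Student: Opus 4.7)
The plan is to prove both bounds by reducing to Cauchy--Schwarz on each interface edge and then invoking the already established jump bound (Lemma~\ref{la_bound_jump}) together with the ellipticity-type estimate (Lemma~\ref{la_curl_pos}). The two parts differ only in how $\|[v]\|_{L^2(\gamma)}$ is eventually converted into the right-hand side quantity.

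For the first bound \eqref{b_cont1}, I would simply apply Cauchy--Schwarz on each interface edge to the duality pairing and sum:
\[
   b(v,\mu) = \sum_{l=1}^L \<[v],\mu\>_{\gamma_l}
   \le \|[v]\|_{L^2(\gamma)}\,\|\mu\|_{L^2(\gamma)}.
\]
Then Lemma~\ref{la_bound_jump} gives $\|[v]\|_{L^2(\gamma)} \lesssim |\log\uh|^{1/2}\|v\|_{H^{1/2}(\CT)}$, which yields \eqref{b_cont1} immediately.

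For the second bound \eqref{b_cont2}, the key observation is that $v\in V_h$ implies $b(v,\mu)=0$ for every $\mu\in M_k$, so for any such $\mu$ we may write $b(v,\psi)=b(v,\psi-\mu)$. Applying Cauchy--Schwarz as above gives
\[
   b(v,\psi) = b(v,\psi-\mu) \le \|[v]\|_{L^2(\gamma)}\,\|\psi-\mu\|_{L^2(\gamma)}.
\]
Now I would chain Lemma~\ref{la_bound_jump} with the discrete Poincar\'e--Friedrichs type estimate of Lemma~\ref{la_curl_pos} (which applies precisely on $V_h$) to obtain
\[
   \|[v]\|_{L^2(\gamma)}^2
   \lesssim |\log\uh|\,\|v\|_{H^{1/2}(\CT)}^2
   \lesssim |\log\uh|^2\,\|\bcurlS{H} v\|_{\bi{\tilde H}^{-1/2}_\tg(\G)}^2
   \quad\forall v\in V_h.
\]
Inserting this into the previous inequality and taking the infimum over $\mu\in M_k$ produces \eqref{b_cont2}.

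There is no real obstacle here: both estimates are essentially Cauchy--Schwarz on the skeleton followed by appeals to Lemmas~\ref{la_bound_jump} and~\ref{la_curl_pos}. The only point requiring care is remembering to use the discrete kernel property $b(v,\mu)=0$ for $v\in V_h$ in the second part, which is what allows the $L^2$-norm of the best approximation error $\psi-\mu$ (rather than of $\psi$ itself) to appear on the right-hand side.
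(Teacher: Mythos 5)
Your proposal is correct and follows the paper's proof essentially verbatim: Cauchy--Schwarz on the skeleton, Lemma~\ref{la_bound_jump} for \eqref{b_cont1}, and for \eqref{b_cont2} the same shift $b(v,\psi)=b(v,\psi-\mu)$ using the kernel property of $V_h$, followed by chaining Lemma~\ref{la_bound_jump} with Lemma~\ref{la_curl_pos}. Nothing to add.
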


\begin{proof}
There holds
\[
   b(v, \mu) = \sum_{l=1}^L \<[v], \mu\>_{\gamma_l}
   \le
   \|[v]\|_{L^2(\gamma)} \|\mu\|_{L^2(\gamma)}
   \quad\forall v\in X_h, \forall \mu\in M_k.
\]
Estimate (\ref{b_cont1}) follows with the help of Lemma~\ref{la_bound_jump}.
To prove (\ref{b_cont2}) we start as before and note that by
definition of $V_h$ there holds
$b(v,\mu)=0$ $\forall\mu\in M_k$, $\forall v\in V_h$.
Therefore, for any $\mu\in M_k$ and $v\in V_h$ we find that
\be \label{pf10}
   b(v, \psi)
   \le
   \|[v]\|_{L^2(\gamma)} \|\psi - \mu\|_{L^2(\gamma)}.
\ee
The proof of (\ref{b_cont2}) is finished by noting that combination of
Lemmas~\ref{la_bound_jump} and \ref{la_curl_pos} yields
\[
   \|[v]\|_{L^2(\gamma)}
   \lesssim
   |\log\uh|^{1/2} \|v\|_{H^{1/2}(\CT)}
   \lesssim
   |\log\uh|\, \|\bcurlS{H} v\|_{\bi{\tilde H}^{-1/2}_\tg(\G)}
   \quad\forall v\in V_h.
\]
\end{proof}

We are now ready to prove the following Strang-type error estimate.

\begin{theorem} \label{thm_Strang}
System {\rm(\ref{MBEM})} is uniquely solvable.
Let $u$ and $u_h$ be the solutions of {\rm(\ref{VF})} and {\rm(\ref{MBEM})},
respectively. Assuming that $u\in\tilde H^{1/2+r}(\G)$ ($r\in (0,1/2]$)
there holds
\[
   \|u-u_h\|_{H^{1/2}(\CT)}
   \lesssim
   |\log\uh|^{1/2}
   \left(
      s^{-1}
      \inf_{v\in V_h} \|u-v\|_{H^{1/2+s}(\CT)}
      + 
      \sup_{w\in V_h\setminus\{0\}} 
      \frac{a(u-u_h, w)}{\|\bcurlS{H} w\|_{\bi{\tilde H}^{-1/2}_\tg(\G)}}
   \right).
\]
uniformly for $s\in (0, \min\{1/2,r\}]$.
\end{theorem}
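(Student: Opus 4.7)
My plan is to first dispatch existence and uniqueness by the abstract Babu\v ska--Brezzi framework applied to the finite-dimensional saddle point problem (\ref{MBEM}). The four ingredients needed are already at hand: continuity of $a(\cdot,\cdot)$ on $X_h$ from Lemma~\ref{la_a_cont}, ellipticity on the kernel $V_h$ from Lemma~\ref{la_a_ell}, continuity of $b(\cdot,\cdot)$ from Lemma~\ref{la_b_cont}, and the discrete inf-sup condition for $b(\cdot,\cdot)$ from Lemma~\ref{la_LBB}. Because $X_h$ and $M_k$ are finite dimensional this at once yields the unique existence of $(u_h,\lambda_k)$.

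For the error estimate I would follow a classical Strang-type argument. Fix an arbitrary $v\in V_h$, set $w:=v-u_h\in V_h$, and apply the triangle inequality $\|u-u_h\|_{H^{1/2}(\CT)} \le \|u-v\|_{H^{1/2}(\CT)} + \|w\|_{H^{1/2}(\CT)}$. The first summand is immediately dominated by $\|u-v\|_{H^{1/2+s}(\CT)}$ via the continuous embedding $H^{1/2+s}(\G_i)\hookrightarrow H^{1/2}(\G_i)$. To estimate $\|w\|_{H^{1/2}(\CT)}$ I plan to invoke the \emph{mixed} $V_h$-ellipticity (\ref{a_ell}) rather than its quadratic form, as this is what produces the single logarithmic factor $|\log\uh|^{1/2}$ appearing in the statement; dividing the resulting inequality $\|w\|_{H^{1/2}(\CT)}\|\bcurlS{H}w\|_{\bi{\tilde H}^{-1/2}_\tg(\G)}\lesssim |\log\uh|^{1/2}a(w,w)$ by $\|\bcurlS{H}w\|_{\bi{\tilde H}^{-1/2}_\tg(\G)}$ and splitting $a(w,w)=a(v-u,w)+a(u-u_h,w)$ already isolates the consistency supremum on the right-hand side of the theorem.

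The one delicate point, and the obstacle I expect to matter, is to control $a(v-u,w)$ by a product of $s^{-1}\|u-v\|_{H^{1/2+s}(\CT)}$ with the \emph{global} norm $\|\bcurlS{H}w\|_{\bi{\tilde H}^{-1/2}_\tg(\G)}$, rather than with the larger broken $(\CT)$-norm that would come out of Lemma~\ref{la_a_cont2} as stated. A direct substitution with the broken norm would force another application of Lemma~\ref{la_curl_pos} and Lemma~\ref{Lemma_6_Heuer_01_ApS} on $w$ and generate spurious $|\log\uh|$ factors. Fortunately, tracing through the proof of Lemma~\ref{la_a_cont2} shows that the Cauchy--Schwarz step (\ref{pf4}) already delivers the $(\G)$-norm on both sides, and that the passage to the broken norm is only needed on the $u-v$ factor in order to convert $\|\bcurlS{H}(u-v)\|_{\bi{\tilde H}^{-1/2}_\tg(\CT)}$ into $s^{-1}\|u-v\|_{H^{1/2+s}(\CT)}$ by Lemma~\ref{Lemma_5_Heuer_01_ApS} together with (\ref{curl}). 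Using this asymmetric refinement of Lemma~\ref{la_a_cont2}, assembling the two pieces from the splitting of $a(w,w)$, and finally taking the infimum over $v\in V_h$ yields the claimed Strang estimate with the correct logarithmic weight.
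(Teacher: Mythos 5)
Your proof matches the paper's argument step for step: Babu\v{s}ka--Brezzi for solvability, the triangle inequality, the single application of the mixed ellipticity estimate (\ref{a_ell}) to produce the lone $|\log\uh|^{1/2}$, the splitting $a(v-u_h,w)=a(v-u,w)+a(u-u_h,w)$, and Lemma~\ref{la_a_cont2} on the consistency-free piece. The one minor cosmetic difference is that you work with the specific test function $w=v-u_h$ and its Rayleigh quotient, whereas the paper passes immediately to the supremum over $w\in V_h\setminus\{0\}$; these are equivalent.

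You also spotted, and correctly resolved, a real loose end in the chain: Lemma~\ref{la_a_cont2} as stated delivers $\|\bcurlS{H}w\|_{\bi{\tilde H}^{-1/2}_\tg(\CT)}$ on the $w$-factor, while the theorem (and the denominator of the supremum) demands the smaller global norm $\|\bcurlS{H}w\|_{\bi{\tilde H}^{-1/2}_\tg(\G)}$. Since $\|\cdot\|_{\bi{\tilde H}^{-1/2}_\tg(\G)}\lesssim\|\cdot\|_{\bi{\tilde H}^{-1/2}_\tg(\CT)}$ goes the wrong way, a blind substitution would not close. Your observation that the first line of (\ref{pf4}) already carries the $(\G)$-norm on both factors, and that the passage to the $(\CT)$-norm is needed only on the $u-v$ factor (to feed Lemma~\ref{Lemma_5_Heuer_01_ApS} and (\ref{curl})), is exactly the asymmetric refinement that makes the step rigorous; the paper uses this implicitly without comment.
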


\begin{proof}
The existence and uniqueness of $(u_h,\lambda_k)\in X_h\times M_k$ 
follows from the Babu\v{s}ka-Brezzi theory.
Indeed, the bilinear form $a(\cdot, \cdot)$ is continuous on
$X_h$ by Lemma~\ref{la_a_cont} and $V_h$-elliptic by Lemma~\ref{la_a_ell},
and the bilinear form $b(\cdot,\cdot)$ is continuous on $X_h\times M_k$
by (\ref{b_cont1}) and satisfies a discrete inf-sup condition by
Lemma~\ref{la_LBB}.
The continuity and ellipticity bounds depend on $h$ but that does not
influence the unique solvability of the discrete scheme.

The error estimate is obtained by the usual steps.
Combining the triangle inequality, the non-standard
ellipticity and continuity properties of $a(\cdot, \cdot)$, cf.
(\ref{a_ell}) and Lemma~\ref{la_a_cont2}, we obtain for any $v\in V_h$
\beas
\lefteqn{
   \|u-u_h\|_{H^{1/2}(\CT)}
   \le
   \|u-v\|_{H^{1/2}(\CT)} + \|v-u_h\|_{H^{1/2}(\CT)}
}
   \\
   &\lesssim&
   \|u-v\|_{H^{1/2}(\CT)}
   +
   |\log\uh|^{1/2}
   \sup_{w\in V_h\setminus\{0\}} 
   \frac{a(v-u_h, w)}{\|\bcurlS{H} w\|_{\bi{\tilde H}^{-1/2}_\tg(\G)}}
   \\
   &\le&
   \|u-v\|_{H^{1/2}(\CT)}
   +
   |\log\uh|^{1/2}
   \left(
      \sup_{w\in V_h\setminus\{0\}} 
      \frac{a(v-u, w)}{\|\bcurlS{H} w\|_{\bi{\tilde H}^{-1/2}_\tg(\G)}}
      +
      \sup_{w\in V_h\setminus\{0\}} 
      \frac{a(u-u_h, w)}{\|\bcurlS{H} w\|_{\bi{\tilde H}^{-1/2}_\tg(\G)}}
   \right)
   \\
   &\lesssim&
   \|u-v\|_{H^{1/2}(\CT)}
   +
   s^{-1} |\log\uh|^{1/2} \|u-v\|_{H^{1/2+s}(\CT)}
   +
   |\log\uh|^{1/2}
   \sup_{w\in V_h\setminus\{ 0\}} 
   \frac{a(u-u_h, w)}{\|\bcurlS{H} w\|_{\bi{\tilde H}^{-1/2}_\tg(\G)}}.
\eeas
This proves the stated error bound.
\end{proof}

In order to analyze the upper bound provided by Theorem~\ref{thm_Strang}
we need, apart from the extension operators $E_l$ defined before,
projection operators $\pi_l$ acting on $L^2(\gamma_l)$
and mapping onto special continuous, piecewise linear functions on
$\gamma_l$, $l=1, \ldots, L$.
We recall that on each $\gamma_l$ we have two meshes: the trace
mesh $\CT_{l\lag}|_{\gamma_l}$ stemming from the mesh on the sub-domain
$\G_{l\lag}$ of the Lagrangian side, and the mesh $\CG_l$ for the Lagrangian
multiplier. For each element $J\in\CG_l$ we consider a hat function
$\phi_{l,J}$ that vanishes at the endpoints of $J$ and has the tip
at a node of $\CT_{l\lag}|_{\gamma_l}$ that is interior to $J$.
This choice is not unique if $J$ contains more than two elements
of the trace mesh. In that case we select an arbitrary but fixed node for
the definition of $\phi_{l,J}$. Using this notation we define
\be \label{def_pil}
   \pi_l:\; L^2(\gamma_l)
   \to\,
   {\rm span}\{\phi_{l,J};\; J\in\CG_l\} \subset X_{h,l\lag}|_{\bar\gamma_l},
   \quad l=1, \ldots, L,
\ee
such that the integral mean zero conditions
\[
   \<v - \pi_lv, 1\>_J = 0 \quad\forall J\in \CG_l,\; l=1, \ldots, L,
\]
hold. This operator satisfies the following properties.

\begin{lemma} \label{la_pi}
For any $v\in L^2(\gamma_l)$, $\pi_lv$ vanishes at the endpoints of
$\gamma_l$, $l=1, \ldots, L$, and there holds
\be \label{pi_orth}
   \<v - \pi_l v, \mu\>_{\gamma_l} = 0
   \quad\forall v\in L^2(\gamma_l),\; \forall\mu\in M_{k,l},
   \quad l=1, \ldots, L,
\ee
\be \label{pi_cont}
   \|\pi_lv\|_{L^2(\gamma_l)} \lesssim \|v\|_{L^2(\gamma_l)}
   \quad\forall v\in L^2(\gamma_l),\; l=1, \ldots, L.
\ee
\end{lemma}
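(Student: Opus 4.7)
The plan is to dispense with the three claims essentially independently, working element by element on $\CG_l$.

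The endpoint vanishing and the orthogonality (\ref{pi_orth}) are essentially tautological from the construction. Each basis function $\phi_{l,J}$ is supported in $J\in\CG_l$ and vanishes at the endpoints of $J$; in particular it vanishes at both endpoints of $\gamma_l$, and so does $\pi_lv$. For the orthogonality, any $\mu\in M_{k,l}$ decomposes as $\mu=\sum_{J\in\CG_l}\mu_J\mathbf{1}_J$ with scalars $\mu_J$, whence
$\<v-\pi_lv,\mu\>_{\gamma_l}=\sum_{J}\mu_J\<v-\pi_lv,1\>_J=0$
by the defining integral-mean-zero property of $\pi_l$.

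The real content is the $L^2$-stability (\ref{pi_cont}), which I would prove locally. On the interior of each $J\in\CG_l$ only the single basis function $\phi_{l,J}$ is nonzero, so $\pi_lv|_J=c_J\phi_{l,J}|_J$ for a scalar $c_J$. A direct calculation for the tent function of height $1$ over an interval $J$ with tip at any interior point yields $\int_J\phi_{l,J}\,ds=|J|/2$ and $\|\phi_{l,J}\|_{L^2(J)}^2=|J|/3$; both quantities are independent of the position of the tip. The mean-zero condition $\<v-\pi_lv,1\>_J=0$ then forces $c_J=(2/|J|)\int_J v\,ds$, and Cauchy--Schwarz gives $\|\pi_lv\|_{L^2(J)}^2\le(4/3)\|v\|_{L^2(J)}^2$. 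Summing over $J\in\CG_l$ delivers the claim with an absolute constant.

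The only (mild) subtlety is uniformity: the bound must not depend on where the tip of $\phi_{l,J}$ is placed inside $J$ nor on the ratio of the $\CG_l$- and trace-mesh widths. The computation above sidesteps both concerns because $\int_J\phi_{l,J}\,ds$ and $\|\phi_{l,J}\|_{L^2(J)}^2$ scale only with $|J|$. Assumption (A2) is used only to guarantee that each $J\in\CG_l$ contains at least one interior node of $\CT_{l\lag}|_{\gamma_l}$ at which to place the tip, so that the operator $\pi_l$ is actually well defined.
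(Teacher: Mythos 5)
Your proof is correct and follows essentially the same route as the paper: endpoint vanishing and orthogonality are read off the definition, and the $L^2$-stability is obtained locally on each $J\in\CG_l$ by computing $\pi_l v|_J = \tfrac{2}{|J|}(\int_J v\,ds)\,\phi_{l,J}$ and applying Cauchy--Schwarz to get $\|\pi_l v\|_{L^2(J)}^2 \le \tfrac 43 \|v\|_{L^2(J)}^2$. Your explicit remark that $\int_J\phi_{l,J}$ and $\|\phi_{l,J}\|_{L^2(J)}^2$ do not depend on where the tip sits inside $J$ is a nice clarification of a point the paper leaves implicit, but otherwise the arguments coincide.
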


\begin{proof}
For $l\in\{1, \ldots, L\}$ let $v\in L^2(\gamma_l)$ be given.
By definition of $\pi_l$, $\pi_lv$ vanishes at the endpoints of $\gamma_l$,
and the orthogonality (\ref{pi_orth}) follows by noting that any
$\mu\in M_{k,l}$ is constant on any $J\in\CG_l$.

To show (\ref{pi_cont}) let $J\in\CG_l$ be given. With $\phi_{l,J}$ being
the hat function defined previously (with height $1$) there holds
\[
   \pi_lv = \frac 2{|J|} (\int_J v\,ds)\; \phi_{l,J}
   \quad\mbox{on}\quad J
\]
so that
\[
   \|\pi_lv\|_{L^2(J)}^2
   =
   \frac 4{3|J|} (\int_Jv\,ds)^2
   \le
   \frac 43 \|v\|_{L^2(J)}^2.
\]
Summing over $J\in\CG_l$ finishes the proof.
\end{proof}

We are now ready to analyze the first term of the upper bound provided
by Theorem~\ref{thm_Strang}.

\begin{lemma} \label{la_error_Vh}
For $r\in (0,1/2]$ let $u\in H^{1/2+r}(\G)$. There holds
\[
   \inf_{v\in V_h} \|u - v\|_{H^{1/2+s}(\CT)}^2
   \lesssim
   \|u - w\|_{H^{1/2+s}(\CT)}^2
   +
   \sum_{l=1}^L h_{l\lag}^{-2s}
   \Bigl(
      \|u - w_{l\lag}\|_{L^2(\gamma_l)}^2
      +
      \|u - w_{l\mor}\|_{L^2(\gamma_l)}^2
   \Bigr)
   \quad\forall w\in X_h
\]
uniformly for $s\in (0,\min\{1/2,r\}]$.
\end{lemma}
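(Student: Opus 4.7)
The plan is to construct, for every given $w\in X_h$, a correction $v\in V_h$ close to $w$ in the required broken norm, and then conclude by the triangle inequality. The correction is built from the jump $[w]$ on the skeleton using the projections $\pi_l$ of Lemma~\ref{la_pi} and the extensions $E_l$ of~(\ref{def_El}).

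More precisely, for each $l\in\{1,\ldots,L\}$ I would set $w_l:=\pi_l([w]|_{\gamma_l})$, which is a continuous piecewise linear function on $\gamma_l$ vanishing at the endpoints of $\gamma_l$ (Lemma~\ref{la_pi}). I then extend to the Lagrangian sub-domain via $E_lw_l\in X_{h,l\lag}$ and further by zero to the remaining sub-domains, obtaining $\tilde w_l\in X_h$. Finally I define $v:=w-\sum_{l=1}^L \tilde w_l\in X_h$. The first task is to check $v\in V_h$: since $\tilde w_l$ is supported in $\G_{l\lag}$ with trace $w_l$ on $\gamma_l$ and with trace zero on every other interface edge (using that $w_l$ vanishes at the endpoints of $\gamma_l$, so the extension vanishes at all vertices shared with neighbouring edges of $\G_{l\lag}$), the jump of $v$ on $\gamma_l$ equals $[w]|_{\gamma_l}-w_l=[w]-\pi_l[w]$, which by (\ref{pi_orth}) is $L^2(\gamma_l)$-orthogonal to $M_{k,l}$; thus $b(v,\mu)=0$ for every $\mu\in M_k$.

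For the norm estimate I write $\|u-v\|_{H^{1/2+s}(\CT)}\le\|u-w\|_{H^{1/2+s}(\CT)}+\|\sum_l\tilde w_l\|_{H^{1/2+s}(\CT)}$ and bound each $\tilde w_l$ on its supporting sub-domain $\G_{l\lag}$. Using Lemma~\ref{la_E} with exponent $1/2+s$ gives $\|E_lw_l\|_{H^{1/2+s}(\G_{l\lag})}\lesssim h_{l\lag}^{-s}\|w_l\|_{L^2(\gamma_l)}$ with constant independent of $s$, and $\|w_l\|_{L^2(\gamma_l)}=\|\pi_l[w]\|_{L^2(\gamma_l)}\lesssim\|[w]\|_{L^2(\gamma_l)}$ by~(\ref{pi_cont}). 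Since $u\in\tilde H^{1/2+r}(\G)\subset C(\G)$ has no jump, $[w]=[w-u]$, so the triangle inequality yields $\|[w]\|_{L^2(\gamma_l)}\le\|u-w_{l\lag}\|_{L^2(\gamma_l)}+\|u-w_{l\mor}\|_{L^2(\gamma_l)}$. Summing the contributions on each sub-domain using that every $\G_i$ carries a bounded number of interface edges (so triangle inequality costs only a fixed factor) produces the claimed bound.

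The only genuinely delicate point is the verification that $v\in V_h$, because the correction on $\gamma_l$ must not spoil the mortar orthogonality on neighbouring interface edges sharing an endpoint with $\gamma_l$; this is exactly what the endpoint-vanishing property of $\pi_l$ (Lemma~\ref{la_pi}) is designed to guarantee, and once it is in place the rest is a direct combination of Lemmas~\ref{la_pi} and~\ref{la_E}. The $s$-uniformity follows from the fact that Lemma~\ref{la_E} is uniform in $s\in[0,1]$ and no fractional-order embedding constants of the type in Lemma~\ref{Lemma_5_Heuer_01_ApS} are invoked here.
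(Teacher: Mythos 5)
Your proposal is correct and follows essentially the same route as the paper: build a correction from $E_l\circ\pi_l$ applied to the jump $[w]$, extend by zero onto $\G_{l\lag}$, check membership in $V_h$ via the endpoint-vanishing of $\pi_l$ together with (\ref{pi_orth}), and then combine the triangle inequality with Lemma~\ref{la_E} and (\ref{pi_cont}), noting that $\|[w]\|_{L^2(\gamma_l)}\le\|u-w_{l\lag}\|_{L^2(\gamma_l)}+\|u-w_{l\mor}\|_{L^2(\gamma_l)}$. Two small remarks. First, your sign choice $v=w-\sum_l\tilde w_l$ is the one that actually works: with it $[v]|_{\gamma_l}=[w]-\pi_l[w]$, which is annihilated by $M_{k,l}$ precisely because of (\ref{pi_orth}); the paper's proof writes $v=w+\sum_l r^l$ with $r^l=E_l\pi_l([w])$, which would give $[v]|_{\gamma_l}=[w]+\pi_l[w]$ and hence $\<[v],\mu\>_{\gamma_l}=2\<[w],\mu\>_{\gamma_l}\ne 0$ in general --- a sign slip in the paper that your construction silently fixes. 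Second, the assertion $\tilde H^{1/2+r}(\G)\subset C(\G)$ is false here: in two dimensions continuity requires Sobolev index $>1$, whereas $1/2+r\le 1$. What you actually need, and what is true, is that $u\in H^{1/2+r}(\G)$ with $r>0$ has a well-defined single-valued $L^2$-trace on each $\gamma_l$ (cf.\ Lemma~\ref{la_trace}), so that $[u]|_\gamma=0$ and therefore $[w]=[w-u]$ on $\gamma$. With that correction the argument is complete and the claimed $s$-uniformity is justified exactly as you say, since only Lemma~\ref{la_E} and (\ref{pi_cont}) enter, neither of which degenerates as $s\to 0$.
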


\begin{proof}
Let $w\in X_h$ be given. We adapt $w$ such that the new function satisfies
the jump conditions defining $V_h$, cf. (\ref{def_Vh}). We set
\[
   v := w + \sum_{l=1}^L r^l \in X_h
\]
with
\[
   r^l := \left\{
      \begin{array}{ll}
         E_l \pi_l (w_{l\lag}|_{\gamma_l} - w_{l\mor}|_{\gamma_l})
                   & \hbox{on } \bar\G_{l\lag},
         \\
         0         & \hbox{elsewhere.}    
   \end{array}
   \right.
\]
Here, $E_l$ and $\pi_l$ are the extension and projection operators
specified in (\ref{def_El}) and (\ref{def_pil}), respectively.
Note that, since $\pi_l (w_{l\lag}|_{\gamma_l} - w_{l\mor}|_{\gamma_l})$
vanishes at the endpoints of $\gamma_l$, the extension
$E_l \pi_l (w_{l\lag}|_{\gamma_l} - w_{l\mor}|_{\gamma_l})$
vanishes on $\bG_{l\lag}\setminus\gamma_l$.
Therefore, using (\ref{pi_orth}) one obtains
\beas
   \<[v], \mu\>_{\gamma_l}
   &=&
   \<v_{l\lag} - v_{l\mor}, \mu\>_{\gamma_l}
   =
   \<w_{l\lag} + r^l - w_{l\mor}, \mu\>_{\gamma_l}
   \\
   &=&
   \<w_{l\lag} - w_{l\mor}
     +
     \pi_l (w_{l\lag}|_{\gamma_l} - w_{l\mor}|_{\gamma_l}),
   \mu\>_{\gamma_l}
   =
   0
   \qquad \forall\mu\in M_{k,l},\ l=1, \ldots, L.
\eeas
That is, $v\in V_h$. We start bounding the error by
\bea \label{pf8}
   \|u - v\|_{H^{1/2+s}(\CT)}^2
   &=&
   \sum_{i=1}^N\;
   \Bigl\|u_i - w_i - \sum_{l\in\{1,\ldots,L\}:\, l\lag=i} r^l
   \Bigr\|_{H^{1/2+s}(\G_i)}^2
   \nonumber
   \\
   &\lesssim&
   \sum_{i=1}^N \|u_i - w_i\|_{H^{1/2+s}(\G_i)}^2
   +
   \sum_{l=1}^L \|r^l\|_{H^{1/2+s}(\G_{l\lag})}^2.
\eea
Applying Lemma~\ref{la_E}, (\ref{pi_cont}) and the triangle inequality
we find that there holds
\bea \label{pf9}
   \|r^l\|_{H^{1/2+s}(\G_{l\lag})}
   &\lesssim&
   h_{l\lag}^{-s}
   \|\pi_l (w_{l\lag}|_{\gamma_l} - w_{l\mor}|_{\gamma_l})\|
   _{L^2(\gamma_l)} 
   \lesssim
   h_{l\lag}^{-s} \|w_{l\lag} - w_{l\mor}\|_{L^2(\gamma_l)}
   \nonumber
   \\
   &\lesssim&
   h_{l\lag}^{-s}
   \Bigl(
      \|u - w_{l\lag}\|_{L^2(\gamma_l)}
      +
      \|u - w_{l\mor}\|_{L^2(\gamma_l)}
   \Bigr).
\eea
Combining (\ref{pf8}) and (\ref{pf9}) one obtains the assertion.
\end{proof}

The next result provides an a priori error estimate for the mortar BEM.

\begin{theorem} \label{thm_error} 
Let $u$ and $u_h$ be the solutions of {\rm(\ref{VF})} and {\rm(\ref{MBEM})},
respectively. Assuming that $u\in\tilde H^{1/2+r}(\G)$ ($r\in (0,1/2]$)
there holds $\lambda\in \prod_{l=1}^L H^r(\gamma_l)$ and we have the a priori
error estimate
\beas
   \|u-u_h\|_{H^{1/2}(\CT)}^2
   &\lesssim&
      s^{-2}
   |\log\uh|
      \left(
      \|u - v\|_{H^{1/2+s}(\CT)}^2
      +
      \uh^{-2s}
      \sum_{l=1}^L
      \Bigl(
         \|u - v_{l\lag}\|_{L^2(\gamma_l)}^2
         +
         \|u - v_{l\mor}\|_{L^2(\gamma_l)}^2
      \Bigr)
      \right)
\\
&&
      +\
      |\log\uh|^3\, \|\lambda - \mu\|_{L^2(\gamma)}^2
   \qquad \forall v\in X_h,\ \forall \mu\in M_k
\eeas
uniformly for $s\in (0, r]$. Here, $\lambda$ is the Lagrangian multiplier
defined by {\rm(\ref{IP}), (\ref{def_lambda})}.
\end{theorem}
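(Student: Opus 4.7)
The plan is to combine the Strang-type estimate of Theorem~\ref{thm_Strang} with two separate bounds: one for the best-approximation error in $V_h$ (handled by Lemma~\ref{la_error_Vh}), and one for the consistency defect, which has to be identified with $\|\lambda-\mu\|_{L^2(\gamma)}$. As a preliminary, I would first establish the claimed regularity $\lambda\in\prod_{l=1}^L H^r(\gamma_l)$; in particular $\lambda\in L^2(\gamma)$. Since $u\in\tilde H^{1/2+r}(\G)$, interpolating between (\ref{curl_tilde}) and the case $s=1/2$ of (\ref{curl}) gives $\bcurlS{\G}u\in\bi{\tilde H}^{-1/2+r}_\tg(\G)$; the lifted mapping property of the single-layer potential $V$ then places $V\bcurlS{\G}u$ in $\bi{H}^{1/2+r}_\tg(\G)$, and its tangential trace onto $\gamma_l$ (taken from the sub-domain $\G_{l\lag}$) lies in $H^r(\gamma_l)$ by the standard trace theorem. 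For the first term of Theorem~\ref{thm_Strang}, Lemma~\ref{la_error_Vh} applied to an arbitrary $v\in X_h$, combined with the trivial bound $h_{l\lag}^{-2s}\le\uh^{-2s}$, yields
\[
   \inf_{v'\in V_h}\|u-v'\|_{H^{1/2+s}(\CT)}^2
   \lesssim
   \|u-v\|_{H^{1/2+s}(\CT)}^2
   + \uh^{-2s}\sum_{l=1}^L \bigl(\|u-v_{l\lag}\|_{L^2(\gamma_l)}^2 + \|u-v_{l\mor}\|_{L^2(\gamma_l)}^2\bigr).
\]

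The main step, which I expect to be the main obstacle, is bounding the consistency term $\sup_{w\in V_h\setminus\{0\}} a(u-u_h,w)/\|\bcurlS{H}w\|_{\bi{\tilde H}^{-1/2}_\tg(\G)}$ by $\|\lambda-\mu\|_{L^2(\gamma)}$. Since $u\in\tilde H^{1/2+r}(\G)$ has no interface jumps, $\bcurlS{H}u=\bcurlS{\G}u$, so the integration-by-parts identity (\ref{IP}) together with $Wu=f$ and the definition (\ref{def_lambda}) of $\lambda$ yields
\[
   a(u,w)
   = \<V\bcurlS{\G}u,\bcurlS{H}w\>_\CT
   = \<Wu,w\>_\CT - \<\lambda,[w]\>_\tau
   = F(w) - b(w,\lambda)
   \qquad\forall w\in X_h.
\]
Testing the first equation of (\ref{MBEM}) with $w\in V_h$ and using $b(w,\lambda_k)=0$ gives $a(u_h,w)=F(w)$. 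Subtracting, and exploiting $b(w,\mu)=0$ for $w\in V_h$, $\mu\in M_k$, produces the key identity
\[
   a(u-u_h,w) = -b(w,\lambda) = -b(w,\lambda-\mu)\qquad\forall \mu\in M_k,\; w\in V_h.
\]
The estimate (\ref{b_cont2}) of Lemma~\ref{la_b_cont} with $\psi=\lambda$ then delivers
\[
   |a(u-u_h,w)| \lesssim |\log\uh|\,\inf_{\mu\in M_k}\|\lambda-\mu\|_{L^2(\gamma)}\,\|\bcurlS{H}w\|_{\bi{\tilde H}^{-1/2}_\tg(\G)}.
\]
The delicate point is justifying (\ref{IP}) in this form, which uses both the jump-free regularity of $u$ and the $L^2$-regularity of $\lambda$ established at the outset.

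To finish, I would substitute both bounds into Theorem~\ref{thm_Strang} and square. The prefactor $s^{-1}|\log\uh|^{1/2}$ multiplying the best-approximation term squares to $s^{-2}|\log\uh|$, producing the first summand of the claim, while the prefactor $|\log\uh|^{1/2}$ multiplying the consistency bound (which itself carries $|\log\uh|$) yields $|\log\uh|^{3/2}$, whose square gives the $|\log\uh|^3$ factor in front of $\|\lambda-\mu\|_{L^2(\gamma)}^2$.
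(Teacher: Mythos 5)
Your proposal is correct and follows essentially the same route as the paper: establish $\lambda\in\prod_l H^r(\gamma_l)$ via the mapping properties of $\bcurlS{\G}$ and $V$ plus the trace theorem, bound the best-approximation term via Lemma~\ref{la_error_Vh} (with $h_{l\lag}^{-2s}\le\uh^{-2s}$), derive the consistency identity $a(u-u_h,w)=-b(w,\lambda)$ for $w\in V_h$ from the integration-by-parts formula (\ref{IP})/Lemma~\ref{la_IP} and the definition of $V_h$, bound it by (\ref{b_cont2}), and substitute both into Theorem~\ref{thm_Strang} and square. The only cosmetic difference is that the paper invokes Lemma~\ref{la_IP} to justify (\ref{IP}) for $w\in X_h$ rather than arguing directly from the density-based derivation of (\ref{IP}), but you already flag this as the point that needs care, so the two proofs are in agreement.
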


\begin{proof}
Since $u\in \tilde H^{1/2+r}(\G)$ there holds
\[
   \lambda|_{\gamma_l}
   =
   \bt_{l\lag}\cdot(V\bcurlS{\G} u)|_{\gamma_l}
   \in H^r(\gamma_l),
   \quad l=1, \ldots, L.
\]
To this end note that
$\bcurlS{\G}:\; \tilde H^{1/2+r}(\Gamma)\to \bi{\tilde H}^{r-1/2}_\tg(\Gamma)$
(combine (\ref{curl_tilde}) with the continuity
$\bcurlS{\G}:\; H_0^1(\Gamma)\to \bi{L}^2_\tg(\Gamma)$) and
$V:\; \bi{\tilde H}^{r-1/2}_\tg(\Gamma) \to \bi{H}^{1/2+r}_\tg(\Gamma)$.
The trace theorem concludes the claimed regularity of $\lambda$.
In particular there holds $\lambda\in L^2(\gamma)$.

By definition of $V_h$, and making us of Lemma~\ref{la_IP}, we find
\[
   a(u-u_h, w)
   =
   a(u, w) - F(w)
   =
   -b(w, \lambda)
   \quad\forall w\in V_h.
\]
Application of $(\ref{b_cont2})$ yields
\[
   a(u-u_h, w)
   \lesssim
   |\log\uh|\, \inf_{\mu\in M_k} \|\lambda - \mu\|_{L^2(\gamma)} 
   \|\bcurlS{H} w\|_{\bi{\tilde H}^{-1/2}_\tg(\G)}
   \quad\forall w\in V_h.
\]
Therefore, combining Theorem~\ref{thm_Strang} with Lemma~\ref{la_error_Vh}
we obtain
\beas
   \|u-u_h\|_{H^{1/2}(\CT)}^2
   &\lesssim&
   |\log\uh|
   \left\{
      s^{-2}
      \left(
      \|u - v\|_{H^{1/2+s}(\CT)}^2
      +
      \sum_{l=1}^L h_{l\lag}^{-2s}
      \Bigl(
         \|u - v_{l\lag}\|_{L^2(\gamma_l)}^2
         +
         \|u - v_{l\mor}\|_{L^2(\gamma_l)}^2
      \Bigr)
      \right)
\right.
\\
&&
\left.
\qquad\qquad
      +\
      |\log\uh|^2\, \|\lambda - \mu\|_{L^2(\gamma)}^2
   \right\}
   \qquad \forall v\in X_h,\ \forall\mu\in M_k.
\eeas
This proves the statement.
\end{proof}

\noindent{\bf Proof of Theorem~\ref{thm_main}.}
By Theorem~\ref{thm_Strang}, system (\ref{MBEM}) is uniquely solvable. 
We employ the general a priori estimate by Theorem~\ref{thm_error} to show
the given error bound. By standard approximation theory there exist $v\in X_h$
and $\mu\in M_k$ such that
\[
   \|u-v\|_{H^{1/2+s}(\CT)}^2 \lesssim h^{2(r-s)}\|u\|_{\tilde H^{1/2+r}(\G)}^2
   \quad\mbox{and}\quad
   \|\lambda-\mu\|_{L^2(\gamma)}^2
   \lesssim
   k^{2r} \sum_{l=1}^L\|\lambda\|_{H^r(\gamma_l)}^2,
\]
and as in the proof of Theorem~\ref{thm_error} one concludes that
$\sum_{l=1}^L \|\lambda\|_{H^r(\gamma_l)}^2
 \lesssim \|u\|_{\tilde H^{1/2+r}(\G)}^2$.
By Lemma~\ref{la_trace} one bounds
\[
   \|u-v_{l\lag}\|_{L^2(\gamma_l)}
   \lesssim
   s^{-1/2} \|u-v\|_{H^{1/2+s}(\G_{l\lag})}
   \lesssim
   s^{-1/2} h^{r-s}\|u\|_{\tilde H^{1/2+r}(\G)},
\]
and accordingly the mortar part $\|u - v_{l\mor}\|_{L^2(\gamma_l)}$.
Using these bounds in Theorem~\ref{thm_error} and selecting $s=|\log\uh|^{-1}$
one obtains the assertion.
\qed

\section{Numerical results} \label{sec_num}
\setcounter{figure}{0}
\setcounter{equation}{0}

We consider the model problem (\ref{VF}) with $\G=(0,1)\times (0,1)$
and $f=1$. In this case there holds $u\in\tilde H^{1/2+r}(\G)$ for
any $r<1/2$ so that by Theorem~\ref{thm_main}
we expect a convergence of the mortar method close to $h^{1/2}$,
the convergence of the conforming BEM, cf. Remark~\ref{rem_error}.
This assumes that the mesh sizes $h$ (of the sub-domain meshes) and
$k$ (of the meshes for the Lagrangian multiplier on the skeleton) are
proportional, which will be the case in all our experiments. In fact,
the elements of the mesh for the Lagrangian multiplier will always consist
of two or three elements of the trace mesh.

Since the exact solution $u$ to (\ref{Wuf}) is unknown
we approximate an upper bound for the semi-norm
$|u-u_h|_{H^{1/2}(\CT)}$.
Here, we follow the strategy from \cite{GaticaHH_BLM}. Let us
recall the procedure and discussion.

By the ellipticity of $V$ and (\ref{pf2}) there holds
\be \label{n1}
   a(u-u_h,u-u_h)
   \gtrsim
   |u-u_h|_{H^{1/2}(\CT)}^2.
\ee
On the other hand, using that $u$ solves (\ref{Wuf}) and $u_h\in V_h$ solves
(\ref{MBEM}), one finds
\[
   a(u-u_h,u-u_h)
   =
   a(u,u) - 2a(u,u_h) + a(u_h,u_h)
   =
   \<Wu, u\>_\G - 2a(u,u_h) + F(u_h).
\]
By (\ref{IP}) there holds
\[
   a(u,u_h)
   =
   F(u_h) - \<[u_h], \lambda\>_\gamma
\]
such that, with the previous relation,
\be \label{n2}
   a(u-u_h,u-u_h)
   =
   \<Wu,u\>_\G - F(u_h) + 2\,\<[u_h], \lambda\>_\gamma
   \le
   \<Wu,u\>_\G - F(u_h) + 2\,\|[u_h]\|_{L^2(\gamma)} \|\lambda\|_{L^2(\gamma)}.
\ee
Like in the proof of Theorem~\ref{thm_error} one sees that
$\|\lambda\|_{L^2(\gamma)}$ is bounded. Therefore, by (\ref{n1}) we find that
\[
   |u-u_h|_{H^{1/2}(\CT)}^2
   \lesssim
   |\<Wu,u\>_\G - F(u_h)| + \|[u_h]\|_{L^2(\gamma)}.
\]
The terms $F(u_h)$ and $\|[u_h]\|_{L^2(\gamma)}$ are directly accessible
and $\<Wu,u\>_\G$ can be approximated by an extrapolated
value that we denote by $\|u\|_{\rm ex}^2$ (cf.~\cite{ErvinHS_93_hpB}).
Therefore, instead of the relative error
\[
   \|u-u_h\|_{H^{1/2}(\CT)}/\|u\|_{H^{1/2}(\G)},
\]
we present results for the expression
\be \label{n3}
   \Bigl(|\,\|u\|_{\rm ex}^2 - F(u_h)| + \|[u_h]\|_{L^2(\gamma)}
   \Bigr)^{1/2}/\|u\|_{\rm ex}
\ee
which is, up to a constant factor, an upper bound for
$|u-u_h|_{H^{1/2}(\CT)}/\|u\|_{\rm ex}$.

In the figures below we show different error curves, indicated by numbers
$(n)$ ($n=1,\ldots,4$) as follows.

\medskip
\begin{tabular}{lll}
(1)   & $\Bigl(|\,\|u\|_{\rm ex}^2 - F(u_h)|
               +
               \, \|[u_h]\|_{L^2(\gamma)}
         \Bigr)^{1/2}$
      & ``mortar BEM''
\\
(2)   & $\Bigl(|\,\|u\|_{\rm ex}^2 - F(u_h)|\Bigr)^{1/2}$
      & ``error1''
\\
(3)   & $\|[u_h]\|_{L^2(\gamma)}^{1/2}$
      & ``error2''
\\
(4)   & $a(u-\tilde u_h, u-\tilde u_h)^{1/2}$
      & ``conforming BEM''
\end{tabular}

\medskip
Here, $\tilde u_h$ denotes a conforming boundary element solution.
Additionally, all curves are normalized by $\|u\|_{\rm ex}$.

Therefore, to resume, an error curve (1) represents the upper bound
(\ref{n3}) for the (normalized) error $|u-u_h|_{H^{1/2}(\CT)}$ of the
mortar BEM. Curves (2) and (3) are the two components of (1).
Here, (3) controls the non-conformity of the mortar approximant $u_h$.
Curve (4) represents the error of the conforming BEM. In this case it is
equivalent to the error in energy norm $\|u-\tilde u_h\|_{\tilde H^{1/2}(\G)}$.

All results are plotted on double logarithmic scales versus $1/h$.
For our numerical experiments we always use rectangular meshes and
in this section, $h_i$ refers to the length of the longest edge on $\G_i$, and
$h:=\max_ih_i$ as before.

\medskip
\noindent{\bf Conforming sub-domain decomposition.}

{\bf Experiment 1} (conforming mesh, results in Figure~\ref{fig_error1_1a}).
First let us consider a conforming decomposition of $\G$ into four
sub-domains as indicated in Figure~\ref{domainmesh}(a).
Moreover, let us first test the case where the separate meshes on
the sub-domains form globally conforming meshes (we take uniform meshes
consisting of squares).
The corresponding results are shown in Figure~\ref{fig_error1_1a}.
Along with the curves (1), (2), (4) we plot the values of $h^{1/2}$.
The numerical results indicate a convergence of the order $O(h^{1/2})$,
for the conforming as well as the mortar BEM. According to the discussion
above this is the best one can expect. The curves (1) and (2),
referring to our upper bound (\ref{n3}) and the first term in
(\ref{n3}), respectively, are almost identical. This means that the
second term in (\ref{n3}), which in the next plots will be labeled
by (3), is negligible in comparison. Indeed, in this symmetric case the jumps
$[u_h]$ disappear and the numerical results vanish at the order of
single precision. Therefore, in this plot, we do not show the curve (3).

We do not observe a logarithmical perturbation of the convergence in
this range of number of unknowns. This may be caused by the fact that
we are not including the $L^2$-parts in the error since our results are,
up to constant factors, upper bounds only for the semi-norm
$|u-u_h|_{H^{1/2}(\CT)}$. Also, we do not know whether our bounds
including the logarithmic terms are sharp.

\begin{figure} [htb]
\begin{center}
\includegraphics[width=0.7\textwidth]{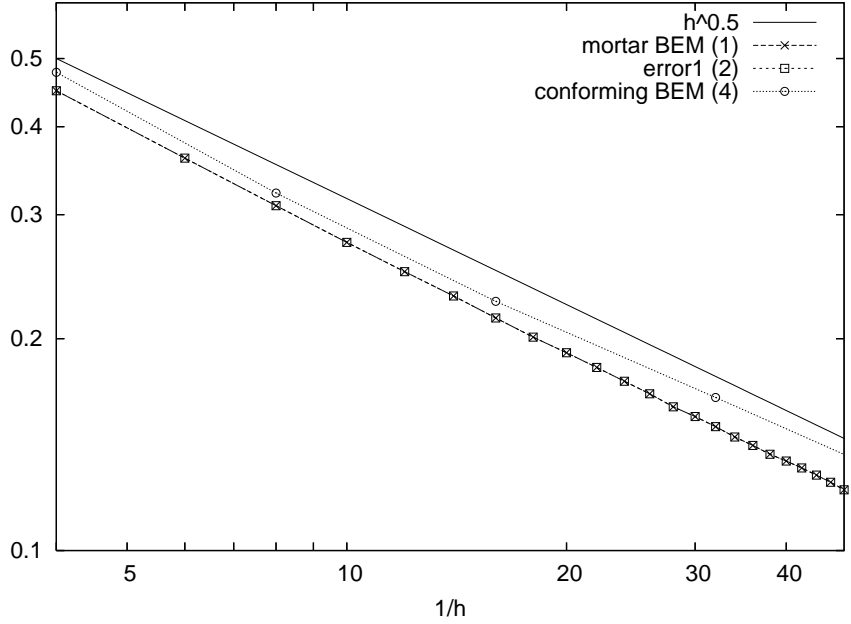}
\end{center}
\caption{Conforming sub-domain decomposition with conforming meshes.}
\label{fig_error1_1a}
\end{figure}

{\bf Experiment 2} (non-conforming mesh, results in
Figure~\ref{fig_errors1_2max}).
Now let us test globally non-conforming meshes. Again we use uniform
meshes consisting of squares on each sub-domain. We mesh as in
Figure~\ref{domainmesh}(a) starting with 2, 3, 4, and 5 ``slides'' on
$\G_1$, $\G_2$, $\G_3$, and $\G_4$, respectively and increase the number
of slides in each sub-domain by one in each step of our sequence of meshes.
The corresponding results are shown in Figure~\ref{fig_errors1_2max}.
Again, a convergence of the expected order $O(h^{1/2})$ is confirmed.
Curve (3) indicates very fast convergence of the jumps
$\|[u_h]\|_{L^2(\gamma)}\to 0$. In the experiments below, however, we
observe a slower convergence. In this particular sequence of meshes,
where we increase the slides on the sub-domains by the same amount,
the trace meshes from different sides on a particular interface edge
approach each other in a certain sense. We conjecture that this specific
situation (``approaching'' conforming meshes) causes the fast convergence
of the jumps.

\begin{figure} [htb]
\begin{center}
\includegraphics[width=0.7\textwidth]{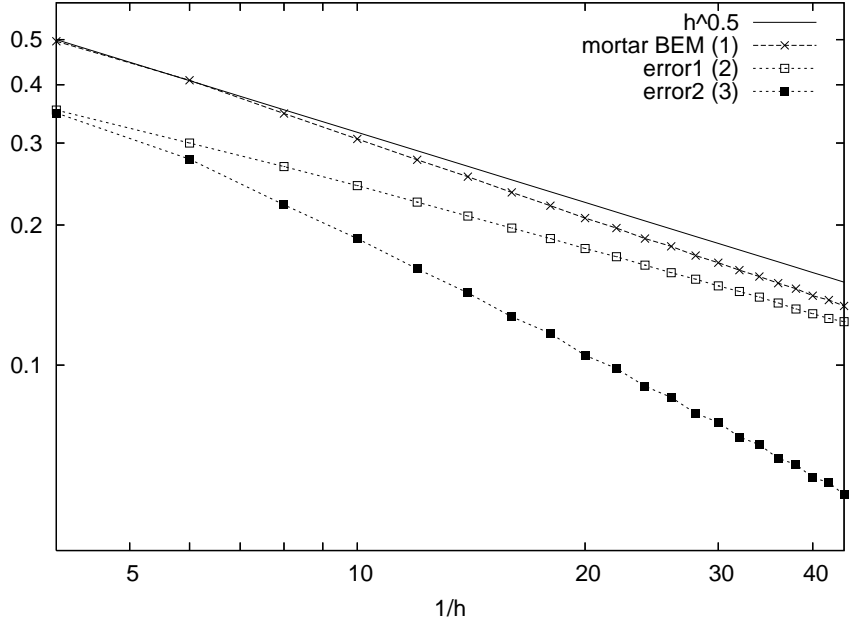}
\end{center}
\caption{Conforming sub-domain decomposition with non-conforming meshes,
         same refinement steps on sub-domains.}
\label{fig_errors1_2max}
\end{figure}

{\bf Experiment 3} (non-conforming mesh, results in
Figure~\ref{fig_errors1_3max}).
For the next experiment we start with a mesh of four squares on each
sub-domain (the sub-domains are again as in Figure~\ref{domainmesh}(a)),
and increase the numbers of slides on 
different sub-domains by different steps (increase by $2$, $3$, $4$, $5$
slides on $\G_1$, $\G_2$, $\G_3$, $\G_4$, respectively).
In this case both error parts, curves (2) and (3), behave like $O(h^{1/2})$,
confirming our a priori error estimate and thus the good performance of the
mortar BEM. Let us note, however, that the part $\|[u_h]\|_{\gamma}^{1/2}$
of the error expression (\ref{n3}) is an overestimation.
Indeed, our substitution (\ref{n3}) for $|u-u_h|_{H^{1/2}(\CT)}/\|u\|_{\rm ex}$ 
is not precise. On the one hand we replaced the term
$2\, \|\lambda\|_{L^2(\gamma)}$ in (\ref{n2}) by $1$
(and the generic constant in (\ref{n1}) by $1$). On the other hand the term 
$\<[u_h], \lambda\>_\gamma$ is of higher order than $\|[u_h]\|_{L^2(\gamma)}$. 
According to (\ref{pf10}) and by standard approximation theory there holds
for any $r<1/2$
\[
   |\<[u_h],\lambda\>_\gamma|
   \lesssim
   \|[u_h]\|_{L^2(\gamma)} \inf_{\psi\in M_k} \|\lambda-\psi\|_{L^2(\gamma)}
   \lesssim
   k^r (\sum_{l=1}^L|\lambda|_{H^r(\gamma_l)}^2)^{1/2}
   \|[u_h]\|_{L^2(\gamma)}.
\]
This shows that $\<[u_h],\lambda\>_\gamma$ is of higher order than
$\|[u_h]\|_{L^2(\gamma)}$. Note that, by the proof of Theorem~\ref{thm_error}
and since $u\in H^{1/2+r}(\G)$, one has the regularity
\(
   \lambda\in \prod_{l=1}^L H^r(\gamma_l)\quad\forall r<1/2.
\)
Therefore, by (\ref{n2}) the term
\[
   \Bigl(|\,\|u\|_{\rm ex}^2 - F(u_h)|\Bigr)^{1/2}/\|u\|_{\rm ex}
   \qquad \mbox{(curve (2),\ ``error1'')}
\]
is asymptotically equal to
\[
   a(u-u_h, u-u_h)^{1/2}/\|u\|_{\rm ex}
\]
and this dominates the error.

\begin{figure} [htb]
\begin{center}
\includegraphics[width=0.7\textwidth]{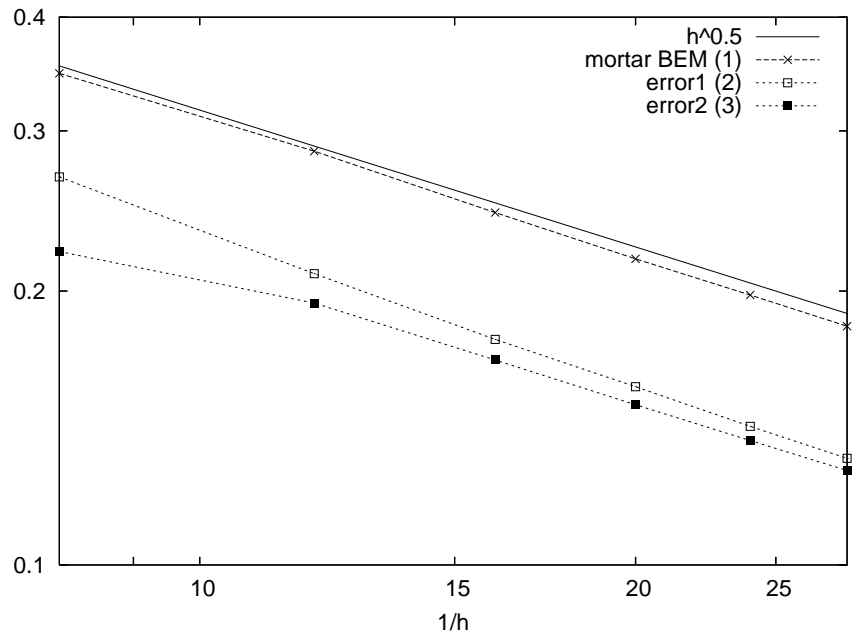}
\end{center}
\caption{Conforming sub-domain decomposition with non-conforming meshes,
         different refinement steps on sub-domains.}
\label{fig_errors1_3max}
\end{figure}

\newpage
\noindent{\bf Non-conforming sub-domain decomposition.}

{\bf Experiment 4} (non-conforming mesh, results in Figure~\ref{fig_error2_3}).
Finally, we consider the fully non-conforming mortar method, i.e. with
non-conforming sub-domain decomposition and non-conforming meshes.
We decompose $\G$ into three sub-domains as in Figure~\ref{domainmesh4} and
use the initial mesh given there on the left.
Then slides on sub-domains are increased
in each direction by 3, 2, 1 on $\G_1$, $\G_2$, $\G_3$, respectively, in
each step. The second mesh is on the right in Figure~\ref{domainmesh4}.
Note that in each second step the cross-point $(0,0)$ between
the sub-domains is a hanging node and our theory includes this case.
The numerical results are shown in Figure~\ref{fig_error2_3}
and again confirm the expected convergence of the mortar BEM.

In this case, the meshes for the Lagrangian multiplier are coarsenings
of the trace meshes from $\G_2$ on $\gamma_{12}$ and $\gamma_{23}$, and
of the trace mesh from $\G_1$ on $\gamma_{13}$. We always join two elements
of the respective trace mesh to form an element of the Lagrangian multiplier
mesh, except for an odd number of elements of the trace mesh when one set
of three elements is joined.
The corresponding numbers of unknowns for the steps are listed in
Table~\ref{tab_4}.

\begin{figure}[htb] 
\begin{center}
\includegraphics[width=0.8\textwidth]{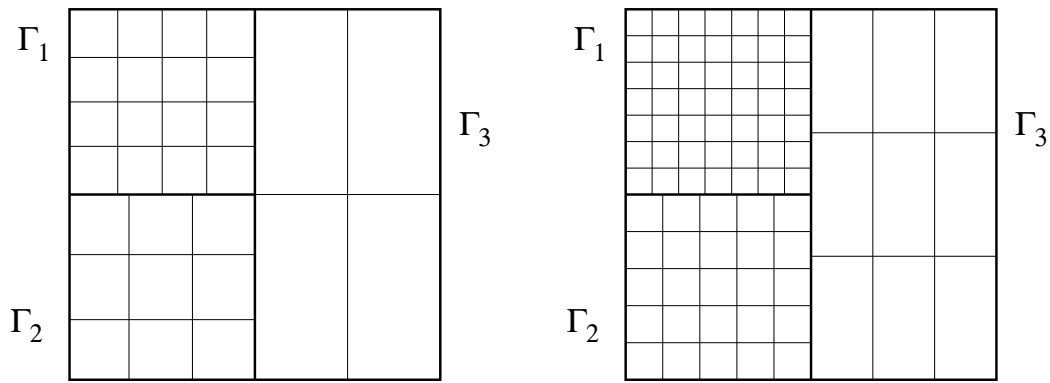}
\end{center}
\caption{Conforming sub-domain decomposition with non-conforming meshes.}
\label{domainmesh4}	
\end{figure}

\begin{figure} [htb]
\begin{center}
\includegraphics[width=0.7\textwidth]{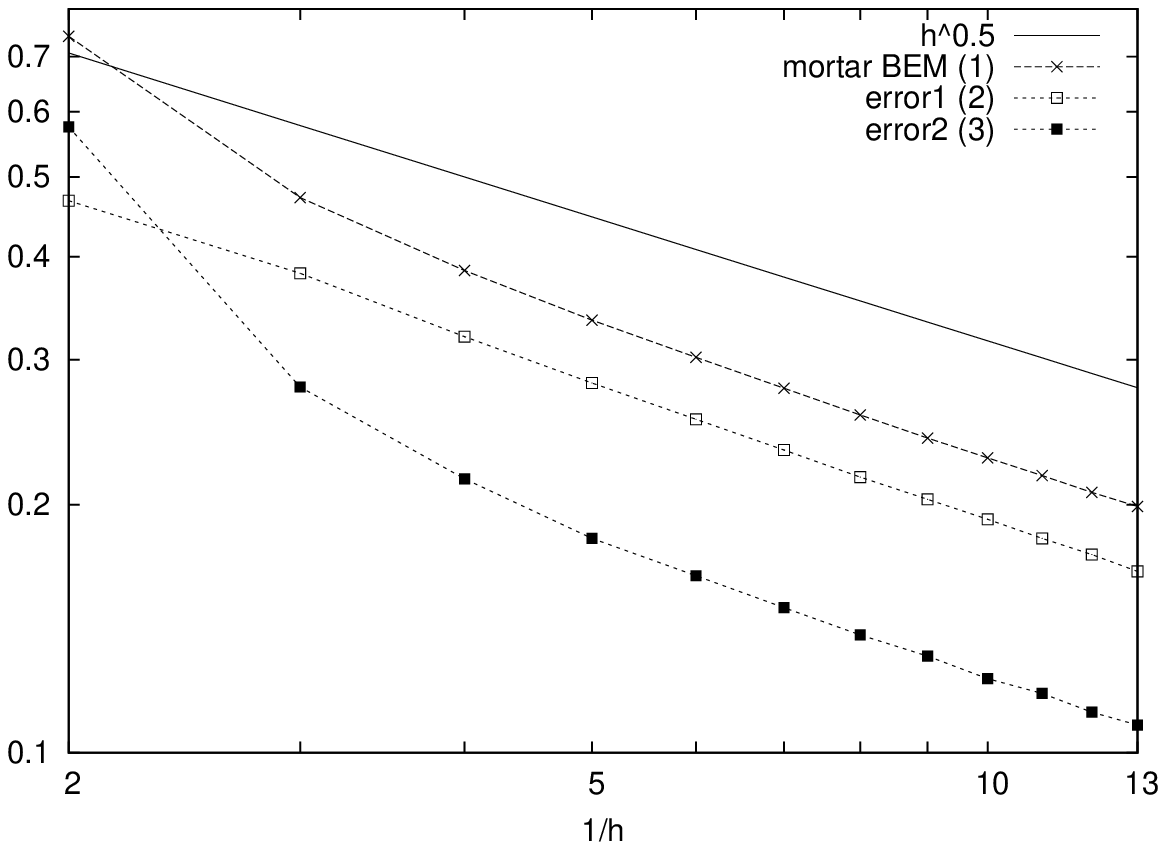}
\end{center}
\caption{Non-conforming sub-domain decomposition with non-conforming meshes,
         different refinement steps on sub-domains.}
\label{fig_error2_3}
\end{figure}

\begin{table}
\begin{center}
\begin{tabular}{r|rr|rr}
$h=h_3$ & $h_1$ & $h_2$ & dim($X_h$) & dim($M_k$)
\\ \hline
  0.5000 & 0.1250 & 0.1667&   27  &     4 \\
  0.3333 & 0.0625 & 0.0833&   80  &     7 \\
  0.2500 & 0.0417 & 0.0556&  161  &    11 \\
  0.2000 & 0.0313 & 0.0417&  270  &    14 \\
  0.1667 & 0.0250 & 0.0333&  407  &    18 \\
  0.1429 & 0.0208 & 0.0278&  572  &    21 \\
  0.1250 & 0.0179 & 0.0238&  765  &    25 \\
  0.1111 & 0.0156 & 0.0208&  986  &    28 \\
  0.1000 & 0.0139 & 0.0185& 1235  &    32 \\
  0.0909 & 0.0125 & 0.0167& 1512  &    35 \\
  0.0833 & 0.0114 & 0.0152& 1817  &    39 \\
  0.0769 & 0.0104 & 0.0139& 2150  &    42 
\end{tabular}
\end{center}
\caption{Dimensions and mesh sizes for experiment 4.}
\label{tab_4}
\end{table}

\clearpage
\bibliographystyle{siam}
\bibliography{../../bib/bib,../../bib/heuer,../../bib/fem}

\end{document}